\newtheorem{proposition}{Proposition}[section]
\newtheorem{lemma}[proposition]{Lemma}
\newtheorem{corollary}[proposition]{Corollary}
\newtheorem{theorem}[proposition]{Theorem}
\newtheorem{remark}[proposition]{Remark}
\newtheorem{problem}[proposition]{Problem}
\newcommand{\dpt}{\operatorname{depth}}
\newcommand{\rank}{\operatorname{rank}}
\newcommand{\supp}{\operatorname{supp}}
\newcommand{\im}{\operatorname{Im}}
\newcommand{\Ker}{\operatorname{Ker}}
\newcommand{\Id}{\operatorname{Id}}
\newcommand{\Sing}{\operatorname{Sing}}
\newcommand{\La}{\Lambda}
\newcommand{\Om}{\Omega}
\def \R     {\Bbb R}
\def \C     {\Bbb C}
\def \N     {\Bbb N}
\def \FF    {{\mathcal F}}
\def \GG    {{\mathcal G}}
\def \HH    {{\mathcal H}}
\def \MM    {{\mathcal M}}
\def \RR    {{\mathcal R}}
\def \SS    {{\mathcal S}}
\def \mm    {{ m}}
\def \ss    {{ s}}
\def \OO    {{\mathcal O}}
\def \o     {\omega}
\def \g     {\gamma}
\def \w     {\wedge}
\def \nzd  {non-zero-divisor\ }
\def \bR {\bar R}
\def \tRi {\tilde R_i}
\def \tRni {\tilde R_{n,i}}
\def \Ra  {R_{[a]}}
\def \MMa {\MM_{[a]}}
\def \bMM {\bar\MM}
\def \ba  {\bar a}
\def \bo  {\bar \omega}
\def \bg  {\bar \gamma}
\def \bO  {\bar \Omega}
\def \bxi  {\bar \xi}
\begin{document}

\title{
Exterior multiplication with singularities:\\ a Saito's theorem in vector bundles\\
\footnotesize{\rm In memory of Pawe{\l} Doma\'nski}}

\author{B. Jakubczyk}

\address{\noindent Institute of Mathematics, Polish Academy of Sciences,  \'Sniadeckich 8, 00-956 Warsaw, Poland}

\email{b.jakubczyk@impan.pl}

\keywords{Exterior multiplication, Saito theorem, exterior product bundle, exact sequence, splitting, differential forms}
\subjclass{Primary 58A99; Secondary 32L10, 47B38.}

\begin{abstract}
Let $E$ be a vector bundle over a suitable differentiable manifold $M$ and let $\w^pE$ denote $p$-exterior product of $E$.  Given sections $\o_1,\dots,\o_k$ of $E$ and a section $\eta$ of $\w^pE$, we consider the problem if $\eta$ can be written in the form
\[
\eta=\sum \o_i\w\g_i,
\]
where $\g_i$ are sections of $\w^{p-1}E$. An obvious necessary condition $\Om\w\eta=0$, where $\Om=\o_1\w\cdots\w\o_k$, has to be supplemented with a condition that the form $\Om$ has sufficiently regular singularities at points where $\Om(x)=0$. Such a local condition is suggested by an algebraic theorem of K. Saito and is given in terms of the depth of the ideal defined by coefficients of $\Om$. Working in the smooth, real analytic or holomorphic (with $M$ Stein manifold) category, we show that the condition is sufficient for the above property to hold. Moreover, in the smooth category it is sufficient for existence of a continuous right inverse to the operator defined by $(\g_1,\dots,\g_k)\mapsto\sum \o_i\w\g_i$. All these results are also proven in the case where $E$ is a bundle over a suitable closed subset of $M$.
\end{abstract}

\maketitle

\section{Introduction.}

Consider the following problems. Given differential 1-forms $\o_1,\dots,\o_k$ and a differential $p$-form $\eta$ on a manifold $M$, we ask if there exist  $(p-1)$-forms $\g_1,\dots,\g_k$ such that $\eta=\sum\o_i\w\g_i$? If the answer is positive and the forms $\o_1,\dots,\o_k$ are fixed, can we find a linear continuous (in the $C\sp\infty$ topology) operator $\eta\mapsto(\g_1,\dots,\g_k)$ which solves the problem?

To examine the first of the above problems note that the obvious necessary condition on $\eta$ is $\Om\wedge\eta=0$, where $\Om=\o_1\w\cdots\w\o_k$. It is also sufficient, locally around points $x\in M$ where $\Om(x)\not=0$, and the difficulty of the first problem is reduced to points where $\Om$ vanishes. Both questions appear, e.g., when the Moser homotopy method is used in problems of equivalence of contact or symplectic structures with singularities (see \cite{JZ1}, \cite{JZ2}). In order to apply the method one has to solve the so called homological equation and is faced with both above problems (\cite{JZ2}).

In this paper we consider the first problem for $C^r$ sections $\o_1,\dots,\o_k$ of a vector bundle $E$ over $M$, $C^r$ sections $\eta$ of the exterior product bundle $\w^pE$ and $C^r$ sections $\g_1,\dots,\g_k$ of $\w^{p-1}E$. Here $C^r$ is one of the categories $C^\infty$, $C^\omega$ (real analytic), or $C^{hol}$ (holomorphic). We restrict the class of possible singularities of $\Om$ by introducing a depth condition on the ideals of germs generated by the coefficients of $\Om$. Using an extension to non-Noetherian rings of an algebraic theorem of K. Saito \cite{Sa} (Theorem \ref{thmSaito}) we show that under the depth condition \eqref{depth1} the necessary condition $\Om\wedge\eta=0$ is also sufficient for our property to hold (Theorem \ref{thm1}). Moreover, in the $C^\infty$ category we show that the same condition is sufficient for the existence of a continuous linear operator $\eta\mapsto (\g_1,\dots,\g_k)$ which is right inverse to $(\g_1,\dots,\g_k)\mapsto\eta=\sum\o_i\w\g_i$ (Theorem \ref{thm2}). We also prove analogous results for $M$ replaced by a closed subset $X\subset M$ satisfying appropriate conditions (Theorems \ref{thm3} and \ref{thm4}).

For solving in $C^\infty$ category the continuity part of the problem we use results of Doma\'nski and Vogt \cite{DV}. In the case $k=1$ the question can be stated in terms of the exactness and splitting of natural complexes which can be viewed as global Koszul complexes. This case was analyzed in detail in \cite{DJ}.
In order to make the paper accessible for a wider audience we present the results in elementary way.

\section{Algebraic preliminaries, Saito's theorem}

Let $R$ be a commutative ring with unity. Recall that a sequence of elements
\[
a_1,a_2,\dots,a_r \eqno{(\diamond)}
\]
generating a proper ideal of $R$ is called {\it regular} if $a_1$ is a non-zero-divisor of $R$ and $a_i$ is a non-zero-divisor of the quotient ring $R/(a_1R+\cdots+a_{i-1}R)$ for $i=2,\dots,r$.
The sequence $(\diamond)$ is called regular on a $R$-module $\MM$ if  $a_1$ is a non-zero-divisor on $\MM$,  $a_i$ is a non-zero-divisor on the quotient module $\MM/(a_1\MM+\cdots+a_{i-1}\MM)$ for $i=2,\dots,r$, and $a_1\MM+\cdots+a_r\MM\not=\MM$.
The \emph{depth} of a proper ideal $I\subset R$, denoted $\dpt I$, is defined as the supremum of lengths of regular sequences in $I$. For convenience we also define $\dpt\{0\}=0$ and $\dpt\{R\}=\infty$. For properties of the depth one can consult \cite{E} or Appendix 1 in \cite{JZ1}.

\begin{remark}\rm
Note that the depth of a proper ideal $I\subset R$ can not exceed the number of its generators. If $R$ is a local ring with a unique maximal ideal $\mm$, then $I\subset \mm$ and $\dpt{I}$ can not exceed the number of generators of $\mm$. More generally, if $I_1\subset I_2\subset R$ then $\dpt I_1\le \dpt I_2$.
\end{remark}

\begin{remark}\rm
We will use the notion of depth for ideals $I_x^r$ in the rings $C_x^r$ of $C^r$ function germs at given points $x\in M$. Such rings are local with the number of generators of the maximal ideal equal to $n=\dim M$. Thus, $\dpt I_x^r\le n$ for any proper ideal $I_x^r\subset C^r_x$. Recall that in the holomorphic category the depth of an ideal of holomorphic function germs coincides with the codimension of the analytic set germ of zeros of this ideal, see \cite{GH}, Chapter 5.3. In the real analytic category the depth of an ideal coincides with the codimension of the set of its complex zeros since complexification does not change the depth, see \cite{Ru}.
\end{remark}

Consider a free module $\MM$ of rank $m$ over a commutative ring $R$ with unity and let $e_1,\dots,e_m$ be a free basis of $\MM$. We denote by $\w^p\MM$ the $p$-th exterior product of $\MM$ and identify $\w^0\MM=R$ and $\w^{-1}\MM=\{0\}$.
Fix elements $\o_1,\dots,\o_k$ of $\MM$ and consider their exterior product in $\w^k\MM$
\[
\Om=\o_1\w\cdots\w \o_k=\sum_{1\le i_1<\cdots<i_k\le m} a_{i_1\cdots i_k}\,e_{i_1}\w\cdots\w e_{i_k}.
\]
Let $I(\Om)$ denote the ideal in $R$ generated by the coefficients $a_{i_1\cdots i_k}$. Clearly, it is independent of the choice of the basis in $\MM$ and $\dpt I(\Om)\le m-k+1$ as $I(\Om)$ is contained e.g. in the ideal generated by the coefficients of $\o_1$ and even by its first $m-k+1$ coefficients. For proving main results we will use the following theorem proved in Saito \cite{Sa} under additional assumption that $R$ is Noetherian.

\begin{theorem}\label{thmSaito}
If $p<\dpt I(\Om)$ then for any $\eta\in \w^p \MM$ satisfying $\Om\w\eta=0$ there exist $\g_1,\dots,\g_k\in \w^{p-1}\MM$ such that
\[
\eta=\sum_{j=1,\dots,k}\o_j\w\g_j. \eqno(\star)
\]
\end{theorem}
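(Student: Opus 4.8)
The plan is to establish the nontrivial inclusion $\{\eta:\Om\w\eta=0\}\subseteq \sum_j\o_j\w\w^{p-1}\MM=:\mathcal I$, the reverse one being immediate from $\Om\w\o_j=0$. Equivalently, writing $Q=\w^p\MM/\mathcal I$ and $\bar\eta$ for the class of a candidate solution $\eta$, I want to deduce $\bar\eta=0$ from $\Om\w\eta=0$ together with $\dpt I(\Om)\ge p+1$. I would first reduce the question to a purely local computation carried out separately at each coefficient of $\Om$.

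The second step is division at a single coefficient. Fix a coefficient $a=a_{i_1\cdots i_k}$ of $\Om$ and pass to the localization $R_a$. There $a$ is invertible, so the projection of $\o_1,\dots,\o_k$ onto $\langle e_{i_1},\dots,e_{i_k}\rangle$ has unit determinant and $\o_1,\dots,\o_k$ extend to a free basis of $\MM\otimes R_a$. Expanding $\eta$ in this basis, the identity $\Om\w\eta=0$ forces every coefficient of $\eta$ indexed by a $p$-set disjoint from $\{\o_1,\dots,\o_k\}$ to vanish, whence $\eta\in\mathcal I\otimes R_a$; clearing denominators gives $a^{M}\eta\in\mathcal I$ for some $M$. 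Since the coefficients of $\Om$ generate $I(\Om)$ and every monomial of sufficiently high degree in finitely many generators contains one of them to the power $M$, it follows that for every $b\in I(\Om)$ there is an $L$ with $b^{L}\eta\in\mathcal I$; that is, every element of $I(\Om)$ acts nilpotently on $\bar\eta\in Q$.

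The third step kills $\bar\eta$ using depth, by induction on $p$. The case $p=0$ is immediate: $\dpt I(\Om)\ge1$ supplies a \nzd in $I(\Om)$ which annihilates $\eta\in R$, forcing $\eta=0$, matching the empty sum. For the inductive step I would choose the first term $b$ of a regular sequence of length $p+1$ in $I(\Om)$ and reduce modulo $b$. Over $\bar R=R/bR$ the coefficient ideal $I(\bar\Om)=I(\Om)\bar R$ inherits depth $\ge p$, so the inductive hypothesis applies to the reduced data; combining the nilpotence from the second step (a suitable power $b^{L}$, still a \nzd on each free module $\w^q\MM$, annihilates $\bar\eta$) with the regularity of $b$ should propagate $\bar\eta=0$ downward through the presentation $\bigoplus_j\w^{p-1}\MM\to\w^p\MM\to Q\to0$.

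The hard part is precisely this third step in the non-Noetherian setting: converting $\dpt I(\Om)\ge p+1$ into the vanishing $\Ker(\Om\w\,\cdot\,)=\mathcal I$ without the usual Noetherian machinery (associated primes, the $\operatorname{Ext}$-characterization of depth, Buchsbaum--Eisenbud acyclicity). The delicate issue is that $b$ is a \nzd on the free module $\w^p\MM$ but not evidently on the quotient $Q$, so $b\bar\eta=0$ does not at once yield $\bar\eta=0$; one must exploit the decomposability of $\Om$---equivalently, that $I(\Om)$ is the ideal of maximal minors of the $k\times m$ matrix of the $\o_j$---to control the first syzygies of the $\o_j$, which amounts to running the induction on $p$ for exactness and for Koszulness of the syzygies simultaneously. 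An alternative is to descend the finite system of identities produced in the second step to a finitely generated, hence Noetherian, subring $R_0\subseteq R$ and invoke Saito's original theorem there; the obstacle then shifts to arranging that $\dpt_{R_0}I(\Om)\ge p+1$, i.e. that a regular sequence survives in $R_0$, which is the real crux of the non-Noetherian extension.
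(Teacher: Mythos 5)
Your first two steps are sound and coincide with the paper's Lemma \ref{lemS1}: localizing at a nonzero coefficient $a$ of $\Om$ makes $\o_1,\dots,\o_k$ part of a free basis, the relation $\Om\w\eta=0$ kills the purely complementary coefficients of $\eta$, and clearing denominators gives $a^{n}\eta\in\mathcal I$; running over the finitely many coefficients then handles every $b\in I(\Om)$. But the third step, which you yourself flag as ``the hard part'', is where the proof actually lives, and your sketch leaves it genuinely open.

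Two ingredients are missing. First, the element you should quotient by is not the first term $b$ of the regular sequence but the power $a=b^{n}$ delivered by step two, since only $b^{n}\eta$ (not $b\eta$) is known to lie in $\mathcal I$; to make this usable one needs the elementary but not-quite-obvious fact that $b^{n},a_2,\dots,a_r$ is again a regular sequence, which without Noetherianity requires its own argument (the paper's Lemmata \ref{lemS2} and \ref{lemS3}). Second, the way out of the impasse you describe --- that $b$ is a \nzd on $\w^p\MM$ but not evidently on $Q$ --- is not to try to prove regularity on $Q$ at all. Instead one writes $a\eta=\sum_j\o_j\w\g_j$, reduces mod $a$ to get $\sum_j\bo_j\w\bg_j=0$, wedges with $\bo_1\w\cdots\w\hat\bo_j\w\cdots\w\bo_k$ to see that each $\bg_j$ is annihilated by $\bO$, applies the inductive hypothesis in degree $p-1$ over $\bR=R/aR$ (where $\ba_2,\dots,\ba_r$ is regular, so $\dpt I(\bO)\ge r-1>p-1$), lifts the result to $\g_j=\sum_i\o_i\w\xi_{ji}+a\zeta_j$, and then cancels $a$ against the \emph{free} module $\w^{p+k-1}\MM$ after wedging with $\o_2\w\cdots\w\o_k$. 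That cancellation leaves the relation $(\eta-\sum_j\o_j\w\zeta_j)\w\o_2\w\cdots\w\o_k=0$, whose resolution needs the statement for $k-1$ forms in degree $p$; so the induction must be a double induction on $(p,k)$, not the single induction on $p$ you propose. Your alternative of descending to a finitely generated subring founders exactly where you say it does (a regular sequence need not restrict to one), and the paper does not take that route.
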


For the proof we need the following lemmata.
\begin{lemma}\label{lemS1}
There exists $n>0$ such that for any $a\in I(\Omega)$ and arbitrary $p=0,1,\dots,m$ and $\eta\in \w^p\MM$ satisfying $\Om\w\eta=0$ there exist $\g_1,\dots,\g_k\in \w^{p-1}\MM$ such that
\[
a^n\eta=\sum \o_j\w\g_j. \eqno{(\star\star)}
\]
\end{lemma}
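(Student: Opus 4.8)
The plan is to prove the statement with $n=1$, and for this it suffices, by $R$-linearity, to treat the case where $a$ is one of the generating coefficients $a_{i_1\cdots i_k}$ of $I(\Om)$. Write $N\subseteq\w^p\MM$ for the $R$-submodule consisting of all $\sum_{j=1}^k\o_j\w\g_j$ with $\g_j\in\w^{p-1}\MM$; this is exactly the target of $(\star\star)$. Since $N$ is an $R$-submodule, once we know $a_{i_1\cdots i_k}\,\eta\in N$ for every multi-index, a general element $a=\sum c_{i_1\cdots i_k}a_{i_1\cdots i_k}\in I(\Om)$ satisfies $a\eta=\sum c_{i_1\cdots i_k}(a_{i_1\cdots i_k}\eta)\in N$ as well. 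Thus the whole content is the claim $a_{i_1\cdots i_k}\,\eta\in N$ whenever $\Om\w\eta=0$.

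To establish this I would use iterated interior products. Let $e_1^*,\dots,e_m^*\in\MM^*$ be the dual basis and, for a fixed index $(i_1,\dots,i_k)$, let $\iota_l$ denote contraction by $e_{i_l}^*$. Each $\iota_l$ is an antiderivation of degree $-1$, so applying $D=\iota_k\cdots\iota_1$ to the product $\Om\w\eta$ yields the super-Leibniz expansion
\[
0=D(\Om\w\eta)=\sum_{A\sqcup B=\{1,\dots,k\}}\varepsilon_{A,B}\,(\iota_A\Om)\w(\iota_B\eta),
\]
where $\iota_A$ (resp. $\iota_B$) is the ordered composition of the contractions indexed by $A$ (resp. $B$), the signs satisfy $\varepsilon_{A,B}\in\{1,-1\}$, and the left side vanishes because $\Om\w\eta=0$.

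The key observation is a dichotomy among the terms. When $A=\{1,\dots,k\}$ the factor $\iota_A\Om=\iota_k\cdots\iota_1(\o_1\w\cdots\w\o_k)$ is a scalar, equal up to sign to the minor $\det[(\o_b)_{i_a}]=\pm a_{i_1\cdots i_k}$, and the paired factor is $\iota_\emptyset\eta=\eta$; this produces the single term $\varepsilon_{[k],\emptyset}\,(\pm a_{i_1\cdots i_k})\,\eta$. For every other $A$ we have $|A|<k$, so $\iota_A\Om$ is a contraction of the decomposable form $\o_1\w\cdots\w\o_k$ by fewer than $k$ covectors, hence a combination of wedge products of the $k-|A|\ge1$ unused factors $\o_j$; in particular each such term still contains at least one $\o_j$, so $(\iota_A\Om)\w(\iota_B\eta)\in N$. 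Transposing all these terms and using that $\varepsilon_{[k],\emptyset}$ is a unit gives $a_{i_1\cdots i_k}\,\eta\in N$, i.e. $(\star\star)$ with $n=1$.

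I expect the main obstacle to be the bookkeeping in the last paragraph: fixing the super-Leibniz sign pattern $\varepsilon_{A,B}$, confirming the identity $\iota_k\cdots\iota_1(\o_1\w\cdots\w\o_k)=\pm a_{i_1\cdots i_k}$ (the Pl\"ucker coefficient realized as a maximal minor), and verifying that every partial contraction $\iota_A\Om$ with $|A|<k$ genuinely factors through some $\o_j$. The last point is the crux: it rests on iterating the elementary formula $\iota_\xi(\o_1\w\cdots\w\o_k)=\sum_j(-1)^{j-1}\xi(\o_j)\,\o_1\w\cdots\widehat{\o_j}\cdots\w\o_k$, which shows that contracting a decomposable $k$-vector fewer than $k$ times never removes all of its $\o$-factors. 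All operations here are purely algebraic and valid over an arbitrary commutative ring, so no Noetherian or finiteness hypothesis is needed and the usual localization difficulty, namely clearing denominators with a power of $a$ that is uniform in $\eta$, is avoided entirely.
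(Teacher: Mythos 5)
Your argument is correct, and it takes a genuinely different --- and in fact sharper --- route than the paper's. The paper reproduces Saito's localization argument: for a nonzero coefficient $a=a_{i_1\cdots i_k}$ one inverts $a$, completes $[\o_1],\dots,[\o_k]$ to a basis of $\MM_{[a]}$, reads off the division there from $[\Om]\w[\eta]=0$, and then clears denominators, which produces an exponent $n=n_1+n_2$ that a priori depends on $\eta$ (harmless, since the proof of Theorem \ref{thmSaito} only uses ``for some $n\ge1$''). Your iterated--contraction identity
\[
0=\iota_{e_{i_k}^*}\cdots\,\iota_{e_{i_1}^*}(\Om\w\eta)
=\pm\,a_{i_1\cdots i_k}\,\eta+\sum_{A\subsetneq\{1,\dots,k\}}\varepsilon_{A}\,\bigl(\iota_A\Om\bigr)\w\bigl(\iota_{A^c}\eta\bigr)
\]
is pure multilinear algebra, valid over any commutative ring because $\MM$ is free (so the dual basis exists and each $\iota_{e_i^*}$ is an antiderivation of degree $-1$); every term with $A\subsetneq\{1,\dots,k\}$ lies in $N=\sum_j\o_j\w\bigl(\w^{p-1}\MM\bigr)$ because $\iota_A\Om$ retains at least one factor $\o_j$, and the remaining term is a unit multiple of $a_{i_1\cdots i_k}\eta$; hence $a_{i_1\cdots i_k}\eta\in N$ and, by linearity, $a\eta\in N$ for every $a\in I(\Om)$. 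This proves the lemma with the uniform exponent $n=1$, which is strictly more than is claimed, and it would also let one take $a=a_1$ rather than $a=a_1^n$ in the proof of Theorem \ref{thmSaito}, rendering Lemma \ref{lemS3} unnecessary there. What the localization proof buys is that it transfers verbatim from \cite{Sa}; what yours buys is elementarity, the uniform exponent, and the elimination of the denominator--clearing step. The only detail left to nail down is the sign pattern $\varepsilon_A$ in the iterated super--Leibniz expansion, but, as you observe, only the fact that each $\varepsilon_A$ is $\pm1$ is actually used.
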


\begin{proof}
We almost verbatim reproduce the proof of statement (i) of the main theorem in \cite{Sa} to show that the assumption that $R$ is Noetherian invoked there is not needed. The ideal $I(\Om)$ is generated by a finite number of nonzero coefficients $a_I=a_{i_1\cdots i_k}$ of $\Om$, thus it is enough to prove that for each such $a=a_I$ there exists $n$ such that ($\star\star$) holds (the overall $n$ will be larger but finite). We may assume that $a$ is not nilpotent, otherwise there is nothing to prove. Then the set of powers $U=\{a^i\}_{i\ge 0}$, forms a multiplicative subset of $R$ and we may consider the localization $\Ra$ of $R$ over $U$. Its elements can be written as $b/a^i$, $b\in R$. There is a canonical homomorphism $R\to \Ra$ which, as $\MM$ is free, induces homomorphisms $\MM\to\MMa:=\MM\otimes\Ra$ and $\w^p\MM\to\w^p\MMa$ denoted $\o\mapsto [\o]$ and $\eta\mapsto [\eta]$. In particular, we have $R\ni b\mapsto [b]=b/1$ and the homomorphism $\MM\to\MMa$ is induced by the transformation of the basis $e_i\mapsto [e_i]$.

Consider the product $[\Om]=[\o_1]\w\cdots\w[\o_k]$. Since $a$ was chosen one of the coefficients of $\Om$, the form $[\Om]$ written in the free basis $[e_{i_1}]\w\cdots\w[e_{i_k}]$ has $[a]$ as one of its coefficients. Since $[a]$ is invertible in $\Ra$, the sequence $[\o_1],\dots,[\o_k]$ can be completed with some elements $\alpha_1,\dots,\alpha_{m-k}\in\MMa$ to a basis in $\MMa$. Now our problem can be transformed to an easy version. We replace $\eta\in\w^p\MM$ by $[\eta]\in\w^p\MMa$ and look for $\g'_1,\dots,\g'_k\in \w^{p-1}\MMa$ such that
\[
[\eta]=\sum [\o_j]\w\g'_j.
\]
Such $\g'_1,\dots,\g'_k$ exist. To see this one can write $[\eta]$ in the basis which is given by $p$-th wedge products of elements $[\o_i]$ and $\alpha_j$ of the basis in $\MMa$. The coefficients in this representation of $[\eta]$ which appear in elements which consist of $p$-th wedge products of  $\alpha_j$, exclusively, must vanish because $[\Om]\w[\eta]=[\o_1]\w\cdots\w[\o_k]\w[\eta]=0$. The remaining elements in the sum contain wedge products with at least one $[\o_i]$, which proves that $\g'_j$ giving the above representation of $[\eta]$ exist.

We may write $\g'_j=a^{-n_1}[\bar\g_j]$ with $\bar\g_j\in \w^{p-1}\MM$ and a common $n_1\ge 0$. Then
\[
\left[a^{n_1}\eta - \sum \o_j\w\bar\g_j\right]=a^{n_1}[\eta]-\sum [\o_j]\w[\bar\g_j]=0
\]
and, by definition of the localization over powers of $a$, there exists $n_2\ge 0$ such that
\[
a^{n_2}\left(a^{n_1}\eta - \sum \o_j\w\bar\g_j\right)=0\ \ \ \text{in}\ \ \w^p\MM.
\]
Taking $n=n_1+n_2$ and $\g_j=a^{n_2}\bar\g_j$ we obtain ($\star\star$).
\end{proof}

Consider a sequence ($\diamond$) of elements in $R$. We introduce ideals of $R$,
\[
R_1=\{0\}, \ \ R_i=a_2R+\cdots+a_iR,\ \  i=2,\dots,r, \ \ \text{and}\ \  Q_i=a_1R+R_i.
 \]
For elements $a,b\in R$ and an ideal $I\subset R$ we shall often write $a=b$ mod $I$ or $a=b$ in $R/I$ instead introducing special notation for equivalence classes when considering elements in the quotient ring $R/I$.  We need two elementary properties.

\begin{lemma}\label{lemS2}
If the sequence ($\diamond$) is regular then $a_1$ is a \nzd on $R/R_i$, $i=1,\dots,r$.
\end{lemma}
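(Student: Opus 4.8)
The plan is to argue by induction on $i$. The base case $i=1$ is immediate: since $R_1=\{0\}$ we have $R/R_1=R$, on which $a_1$ is a \nzd by the very definition of regularity of $(\diamond)$.

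For the inductive step I would pass to the quotient ring $S:=R/R_{i-1}$ and track two facts there. Writing $\bar b$ for the image of $b\in R$ in $S$, the inductive hypothesis reads: $\bar a_1$ is a \nzd on $S$; while the regularity hypothesis on $(\diamond)$ gives that $a_i$ is a \nzd on $R/Q_{i-1}$. These are precisely the quotients one wants to compare, since $Q_{i-1}=a_1R+R_{i-1}$ and $R_i=a_iR+R_{i-1}$, so the third isomorphism theorem yields $S/\bar a_1S\cong R/Q_{i-1}$ and $S/\bar a_iS\cong R/R_i$. Thus inside $S$ the two elements $\bar a_1,\bar a_i$ satisfy exactly the hypotheses of a length-two regular sequence, and the desired conclusion ``$a_1$ is a \nzd on $R/R_i$'' translates into ``$\bar a_1$ is a \nzd on $S/\bar a_iS$''.

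The heart of the matter is therefore a short transposition lemma in $S$: if $b$ is a \nzd on a ring $T$ and $c$ is a \nzd on $T/bT$, then $b$ is a \nzd on $T/cT$. I would prove this by a direct chase. Suppose $bx=cy$; then $cy\in bT$, and since $c$ is a \nzd modulo $b$ this forces $y\in bT$, say $y=by'$; substituting gives $b(x-cy')=0$, and because $b$ is a \nzd on $T$ we conclude $x=cy'\in cT$. Applying this with $T=S$, $b=\bar a_1$, $c=\bar a_i$ closes the induction.

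The point deserving care — and the main conceptual obstacle — is that one must \emph{not} invoke permutability of regular sequences, which can fail over a non-Noetherian ring: the reversed pair $\bar a_i,\bar a_1$ need \emph{not} be a regular sequence, since $\bar a_i$ may fail to be a \nzd on all of $S$ (verifying that would require a Krull-intersection argument unavailable in this generality). What rescues the proof is that the transposition lemma above is asymmetric: it uses only that $b$ is a \nzd on $T$ and that $c$ is a \nzd modulo $b$, and never needs $c$ to be a \nzd on $T$. This asymmetry is exactly what lets the statement hold with no Noetherian hypothesis, consistent with the paper's aim of removing that assumption from Saito's theorem.
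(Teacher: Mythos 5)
Your argument is correct and is essentially the paper's own proof in different clothing: the paper runs the identical induction as an explicit element chase in $R$ (from $a_1b_1=a_2b_2+\cdots+a_{i+1}b_{i+1}$ it uses regularity of $a_{i+1}$ modulo $Q_i$ to write $b_{i+1}\in Q_i$, substitutes back, and invokes the inductive hypothesis), which is exactly your transposition lemma unwound in the cosets of $R_{i-1}$. Your packaging through $S=R/R_{i-1}$ and the asymmetric two-element lemma is clean and correct --- it coincides with how the appendix of the manuscript proves the equivalent statement --- and your closing remark about not needing permutability of regular sequences correctly identifies why no Noetherian hypothesis is required.
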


\begin{proof}
We will use induction with respect to $i$. For $i=1$ the statement follows from the assumption. Suppose that it holds for an $i<r$. Given $b_1\in R$, we should prove that  $a_1b_1=0$ in $R/R_{i+1}$ implies that  $b_1=0$ in $R/R_{i+1}$. Let $a_1b_1=0$ in $R/R_{i+1}$, i.e., $a_1b_1\in R_{i+1}$. Then
\[
a_1b_1=a_2b_2+\cdots+a_{i+1}b_{i+1}, \ \ \ b_2,\dots,b_{i+1}\in R, \eqno{(\diamond\diamond)}
\]
which means that $a_{i+1}b_{i+1}\in Q_i$ and, equivalently,  $a_{i+1}b_{i+1}=0$ in $R/Q_i$. Since the sequence $(\diamond)$ is assumed regular, we deduce that $b_{i+1}=0$ in $R/Q_i$ which means that $b_{i+1}=c_1a_1+\cdots+c_ia_i$. Plugging such $b_{i+1}$ to ($\diamond\diamond$) we find that
\[
a_1(b_1-c_1a_{i+1})=a_2(b_2+c_2a_{i+1})+\cdots+a_i(b_i+c_ia_{i+1}).
\]
Thus, $a_1(b_1-c_1a_{i+1})=0$ in $R/R_i$ and from the induction assumption we deduce that $b_1-c_1a_{i+1}=0$ in $R/R_i$. This means that $b_1\in c_1a_{i+1}+R_i$, consequently $b_1\in R_{i+1}$, i.e., $b_1=0$ in $R/R_{i+1}$ which was to be proved.
\end{proof}

\begin{lemma}\label{lemS3}
If ($\diamond$) is regular then, for any $n\ge 1$, the sequence $a_1^n, a_2,\dots,a_r$ is regular.
\end{lemma}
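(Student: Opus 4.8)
The plan is to verify directly the three defining conditions for $a_1^n,a_2,\dots,a_r$ to be a regular sequence. Two of them are immediate: the ideal it generates is contained in $(a_1,\dots,a_r)$, hence proper; and $a_1^n$ is a \nzd of $R$ because a product of non-zero-divisors is again one (if $a_1x=0$ forces $x=0$, then $a_1^nx=0$ forces $x=0$ by peeling off one factor at a time). The whole content therefore lies in the remaining condition: for each $i=2,\dots,r$, the element $a_i$ must be a \nzd of $R/(a_1^nR+R_{i-1})$. Writing $Q_{i-1}=a_1R+R_{i-1}$ as in the excerpt, the case $n=1$ of this assertion is exactly the regularity of the original sequence, so I would fix $i$ and argue by induction on the exponent $n$.

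For the inductive step, suppose $a_ib\in a_1^nR+R_{i-1}$ with $n\ge2$; the goal is $b\in a_1^nR+R_{i-1}$. Since $a_1^nR+R_{i-1}\subseteq Q_{i-1}$, original regularity makes $a_i$ a \nzd mod $Q_{i-1}$, so $b\in Q_{i-1}$ and one may write $b=a_1b'+\rho$ with $\rho\in R_{i-1}$. Because $R_{i-1}$ is an ideal, $a_i\rho\in R_{i-1}$, and subtracting it yields $a_1(a_ib')\in a_1^nR+R_{i-1}$, say $a_1a_ib'=a_1^nc+\sigma$ with $\sigma\in R_{i-1}$. The aim now is to cancel one factor of $a_1$ and so descend to exponent $n-1$.

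This cancellation is the main obstacle, and it is precisely where Lemma \ref{lemS2} is the decisive input. Rearranging, $\sigma=a_1(a_ib'-a_1^{n-1}c)$ lies in $a_1R\cap R_{i-1}$; writing $\sigma=a_1\tau$ and applying Lemma \ref{lemS2} (that $a_1$ is a \nzd mod $R_{i-1}$) forces $\tau\in R_{i-1}$, so in fact $a_1R\cap R_{i-1}=a_1R_{i-1}$. Substituting and then cancelling the common factor $a_1$, which is legitimate because $a_1$ is a \nzd of $R$, gives $a_ib'=a_1^{n-1}c+\tau\in a_1^{n-1}R+R_{i-1}$. The inductive hypothesis at exponent $n-1$ yields $b'\in a_1^{n-1}R+R_{i-1}$, and feeding this back through $b=a_1b'+\rho$ produces $b\in a_1^nR+R_{i-1}$, closing the induction. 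The only genuinely nonformal point is the control of the cross term $a_1R\cap R_{i-1}$ between the power of $a_1$ and the tail ideal $R_{i-1}$; everything else is bookkeeping with ideal memberships.
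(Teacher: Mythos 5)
Your proof is correct and rests on exactly the same ingredients as the paper's argument: induction on the exponent $n$, Lemma \ref{lemS2} to cancel a factor of $a_1$ modulo $R_{i-1}$, and the regularity of the original sequence, so I regard it as essentially the same proof. The only difference is the order of the two main steps — you first use original regularity to peel one factor $a_1$ off $b$ (writing $b=a_1b'+\rho$) and then apply the inductive hypothesis at exponent $n-1$ to $b'$, whereas the paper first applies the inductive hypothesis to get $b\equiv a_1^{n-1}d$ and only then uses original regularity to extract the final factor of $a_1$ from $d$; this is a mirror-image rearrangement of the same bookkeeping, and every step of yours checks out.
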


\begin{proof} We use induction with respect to $n$. The case $n=1$ is tautological. Assume that the assertion holds for all powers of $a_1$ less then $n$. To prove that $a_1^n, a_2,\dots,a_r$ is regular note that $a_1^n$ is \nzd in $R$ since $a_1$ is. We have to prove that $a_{i+1}$ is \nzd on $R/(a_1^nR+R_i)$, for $i=1,\dots,r-1$. This is equivalent to showing that $a_{i+1}b\in a_1^nR$ mod $R_i$ implies $b\in a_1^nR$ mod $R_i$.

Let $a_{i+1}b\in a_1^nR$ mod $R_i$, i.e., $a_{i+1}b=a_1^nc=(a_1)^{n-1}a_1c$ mod $R_i$. These equalities and the fact that $a_{i+1}$ is \nzd on $R/(a^{n-1}R+R_i)$ (the induction assumption) imply that $b=a_1^{n-1}d$ mod $R_i$. Plugging such $b$ to the above equality we get $a_1^{n-1}(a_{i+1}d-a_1c)=0$ mod $R_i$. From Lemma \ref{lemS2} it follows that $a_1$ and thus $a_1^{n-1}$ is a \nzd on $R/R_i$, therefore $a_{i+1}d=a_1c$ mod $R_i$. In consequence $a_{i+1}d=0$ mod $a_1R+R_i$ and by the regularity of the sequence ($\diamond$) we find that $d\in a_1R+R_i$, i.e., $d=a_1e$ mod $R_i$. Plugging such $d$ to $b=a_1^{n-1}d$ mod $R_i$ we see that $b\in a_1^nR$ mod $R_i$ which ends the proof.
\end{proof}

{\it Proof of Theorem \ref{thmSaito}.}
We will modify the proof in \cite{Sa} using the above lemmata instead of assuming that $R$ is Noetherian.
The proof uses double induction with respect to $(p,k)$ where $p\ge 0$, $k\ge 0$, $(p,k)\not=(0,0)$.

Denote by $S(p,k)$ the statement of the theorem with fixed $p$ and $k$. In the case $k=0$ we artificially define $\Omega=1\in \w^0\MM=R$ and assume the sums in ($\star$) and ($\star\star$) being zero. Then $\eta=0$ and statement $S(p,0)$ holds for any $p\ge 1$. In the case $p=0$, $k\ge 1$ we have $\w^0\MM=R$ and $\eta\in R$. Since $\dpt I(\Omega)>p=0$, the ideal generated by the coefficients of $\Om$ contains a \nzd and  $\Om\w \eta=0$ implies that $\eta=0$, i.e. statement (i) holds.

Assume now that $k>0$ and $0<p<\dpt I(\Om)$. We have to prove the statement $S(p,k)$ assuming that it holds for $(p-1,k)$ and $(p,k-1)$.

Let $a_1,\dots,a_r$ be a regular sequence of elements in $I=I(\Om)$, with $r=\dpt I(\Om)$.
Consider $\eta\in\w^p\MM$ such that $\Om\w\eta=0$.  Since $a_1\in I(\Om)$, from Lemma \ref{lemS1} it follows that
\[
(a_1)^n\eta=\sum \o_j\w\g_j \eqno{(\star\star)}
\]
for some $n\ge 1$ and $\g_j\in\w^{p-1}\MM$. Lemma \ref{lemS3} implies that the sequence $a_1^n,a_2,\dots,a_r$ is also regular, in particular the element $a_1^n\in I$ is a \nzd in $R$.

We denote $a=a_1^n$ and introduce the quotient ring $\bR=R/aR$. All elements $b$ of $R$ have their canonical counterparts $\bar b=b+aR$ in $\bR$. Similarly, elements of the free modules $\MM$ and $\w^p\MM$ over $R$ have their canonical counterparts in corresponding free modules $\bMM=\MM/a\MM$ and $\w^p\bMM\simeq \w^p\MM/a\w^p\bMM$ over $\bR$, also denoted with bars. The regular sequence $a_1^n,a_2,\dots,a_r$ in $R$ will now be replaced by the regular sequence $\ba_2,\dots,\ba_r$ in $\bR$.

Passing to the quotients in the equality ($\star\star$) we obtain
\[
\sum\bo_j\w\bg_j=0.
\]
Multiplying this equality by $\bo_1\w\cdots\w{\hat\bo_j}\w\cdots\w\bo_k$ (where $\bo_j$ is omitted) gives $\bg_j\w\bo_1\w\cdots\w\bo_k=0$. This allows us to use the induction assumption for $\bg_j\in \w^{p-1}\bMM$ playing the role of $\eta$ in ($\star$), all over the quotient ring $\bR=R/aR$ where the sequence $\ba_2,\dots,\ba_r$ is regular. Namely, the elements $\ba_i$ belong to the ideal generated by the coefficients of $\bO=\bo_1\w\cdots\w\bo_k$. Since the above sequence is regular we have $\dpt I(\bO)\ge r-1$ and, since $p-1<r-1$, we can apply statement $S(p-1,k)$ (true by the induction assumption) and get
\[
\bg_j=\sum_i\bo_i\w\bxi_{ji}
\]
for some $\bxi_{ji}\in \w^{p-2}\bMM$. This equation can be lifted back to $\MM$, i.e., there exist $\xi_{ji}\in \w^{p-2}\bMM$ and $\zeta_j\in\w^{p-1}\MM$ such that $\bxi_{ji}=\xi_{ji}+a\w^{p-2}\MM$ and
\[
\g_j=\sum_i\o_i\w\xi_{ji}+a\zeta_j.
\]
Plugging $\g_j$ to the relation ($\star\star$) and taking into account that $a=a_1^n$ we obtain
\[
a(\eta-\sum_j\o_j\w\zeta_j)=\sum_{i,j}\o_j\w\o_i\w\xi_{ji}.
\]
If $k=1$ then the right hand side is zero and, as $a$ is a \nzd in $R$, we get $\eta=\sum\o_j\w\zeta_j$. For $k>1$ we multiply both sides by $\o_2\w\cdots\w\o_k$ which gives
\[
a(\eta-\sum_j\o_j\w\zeta_j)\w\o_2\w\cdots\w\o_k=0.
\]
Again $a$ is a \nzd in $R$, thus
\[
(\eta-\sum_j\o_j\w\zeta_j)\w\o_2\w\cdots\w\o_k=0.
\]
The ideal $\hat I(\hat \Om)$ generated by the coefficients of $\hat \Om=\o_2\w\cdots\w\o_k$ contains the ideal $I(\Om)$, thus
$\dpt\hat I(\hat\Om)\ge\dpt I(\Om)=r$. Therefore,  we can use the  induction assumption for $(p,k-1)$ and obtain
\[
\eta-\sum_{j=1,\dots,k}\o_j\w\zeta_j=\sum_{i=2,\dots,k}\o_i\w\theta_i
\]
for some $\theta_i\in\w^{p-1}\MM$. This shows that $\eta$ can be written in the form ($\star$) and completes the proof.
\hfill $\Box$

\begin{remark} \rm
If $R$ is Noetherian then, in the above proof, it is not necessary to choose a priori the regular sequence $a_1,\dots,a_r$ in $I(\Om)$ and use the lemma to assure that $a_1^n,\dots,a_r$ is regular. Namely, in this case any \nzd in $I(\Om)$ can be completed to a regular sequence in $I(\Om)$ with $r=\dpt I(\Om)$, in particular $a_1^n$ can be so completed as it is implicitly used in the proof in \cite{Sa}.
\end{remark}

\section{Main results.}
\subsection{Bundles over $M$}

In our considerations we will fix a category $C^r$ which be any of the three categories: $C^{\infty}$ (smooth), $C^{\omega}$ (real analytic), $C^{hol}$ (holomorphic). Let $E$ denote a vector bundle of class $C^r$ of rank $m$ over a manifold $M$. In the first two cases we assume that $M$ is a second countable real differential manifold of class $C^r$. In the last case we assume that $M$ is a complex Stein manifold (for example, a pseudoconvex domain in $\C^n$).

We denote by $\Lambda^r_p(M;E)$ the linear space of $C^r$ sections of the $p$-th skew-symmetric power of $E$ denoted $\w^pE$. In particular, $\Lambda^r_0(M)=\Lambda^r_0(M;E)$ is the space of $C^r$ functions on $M$, also denoted by $C^r(M)$. Note that $C^r(M)$ is also a ring and $\Lambda^r_p(M;E)$ are modules over $C^r(M)$. Local sections in $\Lambda^r_p(M;E)$ of the bundle $\w^pE$ can be written in the form
\[
\eta=\sum_{1\le i_1<\cdots<i_p\le m}g_{i_1\cdots i_p}\,e_{i_1}\w\cdots\w e_{i_p},
\]
where $"\w"$ denotes the pointwise skew symmetric product,  $g_{i_1\cdots i_p}$ are local functions of class $C^r$ on $M$ and $e_1,\dots e_m$ are local, pointwise linearly independent sections of $E$ of class $C^r$.

Let us fix $1\leq k\leq m$ and sections  $\o_1,\dots,\o_k$ of $E$ of class $C^r$. Denote
\[
\Om=\o_1\w\cdots\w\o_k.
\]
The sections $\o_1,\dots,\o_k$ define two linear operators $A$, $B$ which form a diagram
\begin{equation}\label{cABM}
(\La^r_{p-1}(M;E))^k \mathop{\longrightarrow}\limits^{A} \La_p^r(M;E) \mathop{\longrightarrow}\limits^{B} \La_{p+k}^r(M;E)
\end{equation}
where
\begin{equation}\label{opA2}
A(\g_1,\dots,\g_k)=\sum_{1\le j\le k}\o_j\w\g_j,\quad
\end{equation}
\begin{equation}\label{opB2}
B\eta=\Om\w\eta.
\end{equation}
Clearly, $BA=0$ thus we have $\im A\subset \Ker B$ and \eqref{cABM} is a complex. Note that $A$ and $B$ are also homomorphisms of the corresponding modules. Our first problem is to determine when $\im A=\Ker B$, i.e., when the complex \eqref{cABM} is \emph{exact}.
Explicitly, we ask when for any $\eta\in \La^r_p(M;E)$ satisfying $\Om\w\eta=0$ there exist $\g_1,\dots,\g_k\in \La^r_{p-1}(M;E)$ such that
\begin{equation}\label{star2}
\eta=\sum_{1\le j\le k}\o_j\w\g_j.
\end{equation}

In order to state a sufficient condition for this property we define the \emph{singular set} $\Sing(\Om)$ as the set of points where $\o_1,\dots,\o_k$ are linearly dependent or, equivalently,
\[
\Sing(\Om)=\{x\in M\ |\ \Om(x)=0\}.
\]
For a fixed $x\in M$ and a local basis near $x$ of $C^r$ sections $e_1,\dots,e_m$ of the bundle $E$ we can write
\[
\Om=\sum_{1\le i_1<\cdots<i_k\le m}f_{i_1\cdots i_k}\,e_{i_1}\w\cdots\w e_{i_k}.
\]
The \emph{ideal} $I^r_x(\Om)$ is defined as the ideal of germs at $x$ of functions of class $C^r$ generated by the coefficients $f_{i_1\cdots i_k}$ of the above expansion. If $x\not\in \Sing(\Om)$ then this ideal coincides with the ring $C_x^r(M)$ of function germs at $x$ of class $C^r$, otherwise it is a proper ideal of $C_x^r(M)$ independent of the choice of a local basis in $E$.

\begin{theorem}\label{thm1}
In any of the categories $C^r$, $r\in\{\omega, hol,\infty\}$, the complex \eqref{cABM} is exact if
\begin{equation}\label{depth1}
p<\dpt I^r_x(\Om)\ \ \forall\, x\in \Sing(\Om).
\end{equation}
\end{theorem}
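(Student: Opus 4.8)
The plan is to reduce the global statement to local solvability via Saito's Theorem \ref{thmSaito}, and then to patch local solutions into a global one by category-specific methods. The inclusion $\im A\subset\Ker B$ is immediate from $BA=0$, so the content is the reverse inclusion: every $\eta\in\La^r_p(M;E)$ with $\Om\w\eta=0$ lies in $\im A$.

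First I would establish local solvability at the germ level at every $x\in M$. At a point $x\notin\Sing(\Om)$ the sections $\o_1,\dots,\o_k$ are pointwise independent near $x$, so they extend to a local $C^r$ frame $\o_1,\dots,\o_k,\alpha_1,\dots,\alpha_{m-k}$ of $E$; expanding $\eta$ in the induced basis of $\w^pE$ and using $\Om\w\eta=0$ to kill the terms built solely from the $\alpha$'s shows $\eta=\sum\o_j\w\g_j$ near $x$. At a point $x\in\Sing(\Om)$ I would apply Theorem \ref{thmSaito} with $R=C^r_x(M)$ the ring of $C^r$ germs at $x$, $\MM$ the free $R$-module of germs of sections of $E$ (free of rank $m$ by local triviality), and $I(\Om)=I^r_x(\Om)$. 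The hypothesis $p<\dpt I^r_x(\Om)$ is exactly Saito's hypothesis, and it produces germs $\g_1,\dots,\g_k$ of $\w^{p-1}E$ solving ($\star$) at $x$. The essential point here is that $C^\infty_x(M)$ is \emph{not} Noetherian, so the non-Noetherian version of Saito's theorem proved above (Lemmata \ref{lemS1}--\ref{lemS3}) is precisely what legitimizes this step; in the analytic and holomorphic cases the germ rings are Noetherian and the classical statement would already suffice.

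Local solvability produces a locally finite open cover $\{U_\alpha\}$ of $M$ together with local solutions $(\g^\alpha_1,\dots,\g^\alpha_k)$ with $\sum_j\o_j\w\g^\alpha_j=\eta$ on $U_\alpha$. In the $C^\infty$ category I would globalize by a partition of unity: choosing $\{\rho_\alpha\}$ subordinate to $\{U_\alpha\}$ and setting $\g_j=\sum_\alpha\rho_\alpha\g^\alpha_j$, the $C^\infty(M)$-linearity of $A$ gives $A(\g_1,\dots,\g_k)=\sum_\alpha\rho_\alpha A(\g^\alpha_1,\dots,\g^\alpha_k)=\sum_\alpha\rho_\alpha\eta=\eta$, which finishes this case elementarily. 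In the real analytic and holomorphic categories partitions of unity are unavailable, so instead I would pass to sheaves: writing $\mathcal{K}=\Ker B$ and $\mathcal{S}=\Ker A$ as sheaves of $C^r$-modules, local solvability is precisely the exactness of the sheaf sequence, giving a short exact sequence $0\to\mathcal{S}\to(\mathcal{E}^{p-1})^k\to\mathcal{K}\to0$, where $\mathcal{E}^{p-1}$ denotes the sheaf of $C^r$ sections of $\w^{p-1}E$. Since the outer sheaves are locally free, the syzygy sheaf $\mathcal{S}$ is coherent, and because $M$ is Stein (resp. a real analytic manifold) Cartan's Theorem B gives $H^1(M,\mathcal{S})=0$. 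The long exact cohomology sequence then shows that $H^0((\mathcal{E}^{p-1})^k)\to H^0(\mathcal{K})$ is surjective on global sections, i.e. the global $\eta$ lifts to global $(\g_1,\dots,\g_k)$.

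The conceptual heart is concentrated in the local step at singular points, which is handled entirely by the non-Noetherian extension of Saito's theorem; once that is in place the globalization is comparatively soft. The only real delicacy in the globalization is invoking the correct vanishing in each category — a partition of unity in $C^\infty$, and coherence of $\mathcal{S}$ together with Theorem B (valid for coherent analytic sheaves on Stein manifolds, and on real analytic manifolds after complexification à la Cartan) in the other two cases. I would take care to check that the chosen local solutions genuinely assemble into a short exact sequence of sheaves, that is, that $\im A=\mathcal{K}$ holds stalkwise, which is exactly what the local step provides.
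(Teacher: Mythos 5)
Your proposal is correct and follows essentially the same route as the paper: the local step at each germ is handled by the non-Noetherian version of Saito's theorem (the paper's Lemma \ref{lem-germs}), and the local-to-global passage is done by a partition of unity in the $C^\infty$ case and by coherence of the kernel sheaf together with Cartan's Theorem B in the analytic and holomorphic cases (the paper's Proposition \ref{prop1} and Lemma \ref{lem-local-global}, which phrase the $H^1$-vanishing argument via \v{C}ech cocycles rather than the long exact cohomology sequence, but to the same effect). Your observation that the non-Noetherian extension is genuinely needed only for $C^\infty$ germs matches the paper's remarks.
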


In particular, if in the above framework $\eta$ is a section of $E$ and $\dpt I^r_x(\Om)\ge 2$ for any $x\in \Sing(\Om)$ then $\o_1\w\cdots\w\o_k\w\eta=0$ is a sufficient (and necessary) condition that there are functions $f_i$ on $M$ such that $\eta=\sum f_i\,\o_i$.

\begin{remark} \rm
Clearly, if $M$, $E$ are real analytic and $\o_i$ are real analytic sections of $E$ then they define linear operators $A$, $B$ (and the ideals $I^r_x(\Om)$) in both categories $C^\infty$ and $C^\omega$.  In this case verifying the depth condition (\ref{depth1}) in the ring of analytic function germs may be easier since such ring is Noetherian and has no zero divisors. Our continuity results will use exactness of the complex \eqref{cABM} in the $C^\infty$ category. Thus, the following corollary following from our proofs may be useful in this context.
\end{remark}

\begin{corollary}\label{cor1}
If all data: $M$, $E$, $\o_1,\dots,\o_k$, and the depth condition \eqref{depth1}, are considered in the category $C^\omega$, then
the complex \eqref{cABM} is also exact in the smooth category.
\end{corollary}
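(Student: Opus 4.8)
The plan is to deduce the corollary directly from Theorem \ref{thm1} applied in the smooth category, the only point requiring work being that the depth condition \eqref{depth1}, verified in the real analytic germ rings, transfers to the smooth germ rings. Note first that the singular set $\Sing(\Om)=\{x\in M:\Om(x)=0\}$ is a pointwise geometric condition independent of the category, so the set of points at which \eqref{depth1} must hold is the same in $C^\omega$ and $C^\infty$. Consequently it suffices to fix $x\in\Sing(\Om)$ and prove
\[
\dpt I^\infty_x(\Om)\ \ge\ \dpt I^\omega_x(\Om).
\]
Granting this, the hypothesis $p<\dpt I^\omega_x(\Om)$ for all $x\in\Sing(\Om)$ forces $p<\dpt I^\infty_x(\Om)$ for all such $x$, and Theorem \ref{thm1} in the category $C^\infty$ then yields exactness of \eqref{cABM} over smooth sections, as desired.

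To establish the inequality I would show that a regular sequence in $C^\omega_x$ contained in $I^\omega_x(\Om)$ remains a regular sequence when its members are regarded as elements of $C^\infty_x$ via the inclusion $C^\omega_x\subset C^\infty_x$. The essential input is the flatness of $C^\infty_x$ as a $C^\omega_x$-module, a classical result of Malgrange; it can also be recovered by factoring $C^\omega_x\to C^\infty_x\to\hat C_x$ through the formal power series ring $\hat C_x$, using that $\hat C_x$ is faithfully flat over $C^\omega_x$ (Noetherian completion) and over $C^\infty_x$ (Malgrange's completion theorem), and invoking the standard descent lemma that $A\to B$ is flat whenever $A\to C$ is flat and $B\to C$ is faithfully flat. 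Granting flatness, let $a_1,\dots,a_r\in I^\omega_x(\Om)$ be a regular sequence in $C^\omega_x$ with $r=\dpt I^\omega_x(\Om)$. Each $a_i$ vanishes at $x$ and so lies in the maximal ideal; hence the images generate a proper ideal of $C^\infty_x$ contained in $I^\infty_x(\Om)$, and the properness requirement in the definition of regularity is automatic. For the nonzerodivisor conditions I would argue by induction: $a_1$ a nonzerodivisor in $C^\omega_x$ means $0\to C^\omega_x\xrightarrow{a_1}C^\omega_x$ is exact, and tensoring with the flat module $C^\infty_x$ keeps it exact; more generally, exactness of $0\to C^\omega_x/(a_1,\dots,a_i)\xrightarrow{a_{i+1}}C^\omega_x/(a_1,\dots,a_i)$ is preserved under $-\otimes_{C^\omega_x}C^\infty_x$, which identifies it with $0\to C^\infty_x/(a_1,\dots,a_i)\xrightarrow{a_{i+1}}C^\infty_x/(a_1,\dots,a_i)$. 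Thus $a_1,\dots,a_r$ is a regular sequence in $C^\infty_x$ inside $I^\infty_x(\Om)$, giving $\dpt I^\infty_x(\Om)\ge r$.

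Combining the two steps proves the corollary. The hard part is the analytic-to-smooth flatness statement $C^\omega_x\to C^\infty_x$: it is precisely this non-Noetherian comparison of germ rings that carries the content, whereas the passage from flatness to preservation of regular sequences is the routine observation that flat base change preserves exactness of the short exact sequences encoding the nonzerodivisor property. I would also remark that this is exactly the inequality implicit in the proof of Theorem \ref{thm1}, which is why the corollary follows from that proof with no further globalization argument.
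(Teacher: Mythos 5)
Your proof is correct, but it applies the key input (Malgrange's flatness of $C^\infty_x$ over $C^\omega_x$, \cite{Ml}, Ch.~VI, Cor.~1.12) at a different point than the paper does. The paper does not transfer the depth condition at all: its proof of Corollary \ref{cor1} runs the local-to-global argument again, using part (b) of Lemma \ref{lem-germs}, which obtains exactness of the \emph{smooth} germ complex by tensoring the already-exact \emph{analytic} germ complex with the flat module $C^\infty_x$; globalization then proceeds via partitions of unity as in Lemma \ref{lem-local-global}. You instead use flatness one level earlier, to show that an analytic regular sequence in $I^\omega_x(\Om)$ stays regular in $C^\infty_x$ (via exactness of $0\to C^\omega_x/(a_1,\dots,a_i)\xrightarrow{a_{i+1}}C^\omega_x/(a_1,\dots,a_i)$ being preserved under $-\otimes_{C^\omega_x}C^\infty_x$, plus the observation that the $a_i$ lie in the maximal ideal so properness is automatic), concluding $\dpt I^\infty_x(\Om)\ge\dpt I^\omega_x(\Om)$ and then invoking Theorem \ref{thm1} in the $C^\infty$ category as a black box. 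Both routes are valid; yours is cleaner at the global level since it needs no second pass through the sheaf-theoretic argument, and it yields the slightly stronger intermediate statement about depths, whereas the paper's route avoids reasoning about regular sequences in the non-Noetherian ring $C^\infty_x$ altogether. Two small caveats: your closing remark that the depth inequality is ``implicit in the proof of Theorem \ref{thm1}'' is not accurate --- the paper never compares the two depths --- and your aside deriving flatness by descent through $\hat C_x$ is unnecessary (and the faithful flatness of $\hat C_x$ over $C^\infty_x$ would itself need justification); citing Malgrange directly, as the paper does, suffices.
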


\begin{remark} \rm
For $k=1$  the first statement of the above result and the first statements in Theorems \ref{thm2}, \ref{thm3} and \ref{thm4} were proven in \cite{DJ}. If $x\in \Sing(\Om)$ then the theorem holds locally, i.e. for germs at $x\in M$, if $p<\dpt I^r_x(\Om)$ (Lemma \ref{lem-germs}). If $x\not\in \Sing(\Om)$ then $\o_1,\dots,\o_k$ are pointwise linearly independent locally around $x$ and the local version is classical and can be shown in elementary way. If $\Sing(\Om)$ is empty then condition (\ref{depth1}) automatically holds. Then the only difficulty of passing from local exactness to global in the analytic cases reduces to using elementary results from the theory of analytic sheaves.
\end{remark}

The continuity part of our problem can be solved in the $C^\infty$ category as follows. For a fixed $p$ consider the complex \eqref{cABM} in the $C^\infty$ category,
\begin{equation}\label{cABMs}
(\La^\infty_{p-1}(M;E))^k \mathop{\longrightarrow}\limits^{A} \La_p^\infty(M;E) \mathop{\longrightarrow}\limits^{B} \La_{p+k}^\infty(M;E).
\end{equation}
All three spaces are spaces of smooth sections of vector bundles thus they have natural $C^\infty$ topologies which make them Fr\'echet spaces.
Recall that a continuous linear operator $A:X\to Y$ between linear locally convex topological spaces \emph{splits} if its image is closed in $Y$ and there is a linear continuous right inverse operator $A^+:\im A\to X$ (i.e. satisfying $AA^+=\Id\vert_{\im A}$), where $\im A$ has the topology induced from $Y$. Similarly, one says that a complex $X\mathop{\longrightarrow}\limits^{A} Y \mathop{\longrightarrow}\limits^{B}Z$ of such spaces \emph{splits}, with $A$ and $B$ continuous, if $\im A=\Ker B$ and $A$ splits.

The following result holds in the $C^\infty$ topology.

\begin{theorem}\label{thm2}
If $M$, $E$ and $\o_1,\dots,\o_k$ are smooth and the complex \eqref{cABMs} is exact then it splits for any $p\ge 1$. In particular, it splits if $p<\dpt I_x^\infty(\Om)$, $x\in \Sing(\Om)$.
\end{theorem}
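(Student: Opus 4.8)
The plan is to invoke the splitting theory for complexes of Fr\'echet spaces developed by Doma\'nski and Vogt \cite{DV}, which was built precisely to detect splitting of such complexes from functional-analytic invariants of the underlying spaces. The key structural observation is that the three spaces in \eqref{cABMs} are spaces of smooth sections of vector bundles over $M$, and hence are all isomorphic (as Fr\'echet spaces) to complemented subspaces of $(C^\infty(M))^N$ for appropriate $N$, equivalently to products/powers of the space of smooth functions $C^\infty(M)$. The operators $A$ and $B$ are homomorphisms of $C^\infty(M)$-modules given by pointwise algebraic operations, so they are continuous linear maps in the $C^\infty$ topology. Thus the abstract input to the Doma\'nski--Vogt machinery is in place: an exact complex of ``tame'' nuclear Fr\'echet spaces of the form $(s)$-type (or of type $(DN)$ and $(\Omega)$), for which splitting is governed by the vanishing of the relevant $\mathrm{Proj}^1$ or $\mathrm{Ext}^1$ functor.

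The main steps I would carry out are as follows. First I would record that exactness of \eqref{cABMs} is given by hypothesis (and is guaranteed by the depth condition via Theorem \ref{thm1}), so $\im A=\Ker B$; it remains only to produce a continuous linear right inverse $A^+:\im A\to (\La^\infty_{p-1}(M;E))^k$. Second, I would verify that $\im A=\Ker B$ is \emph{closed} in $\La^\infty_p(M;E)$: since $B$ is continuous, $\Ker B$ is automatically closed, and exactness identifies it with $\im A$. Third, I would identify the Fr\'echet-space type of the spaces $\La^\infty_\bullet(M;E)$. Because $M$ is a second countable smooth manifold, $C^\infty(M)$ carries the structure needed to apply the splitting criterion of \cite{DV}: the spaces are (up to isomorphism) of the same class as $C^\infty(M)$, and the relevant linear-topological invariants $(DN)$ for the target side and $(\Omega)$ for the kernel/quotient side hold. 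Fourth, I would apply the splitting theorem of \cite{DV} to the short exact sequence
\[
0\longrightarrow \Ker A\longrightarrow (\La^\infty_{p-1}(M;E))^k\mathop{\longrightarrow}\limits^{A}\im A\longrightarrow 0,
\]
concluding that $A$ admits a continuous linear right inverse on its image, i.e. $A$ splits. The final clause of the theorem then follows by combining this with Theorem \ref{thm1}: under $p<\dpt I_x^\infty(\Om)$ for all $x\in\Sing(\Om)$ the complex \eqref{cABMs} is exact, hence it splits.

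The hard part will be the third step: checking that the spaces of smooth sections, together with the operator $A$, satisfy the precise hypotheses of the Doma\'nski--Vogt splitting criterion \cite{DV}. The abstract theorem requires controlling linear-topological invariants of the domain, the image, and the kernel simultaneously, and one must ensure that the relevant conditions (such as the pair $(DN)$--$(\Omega)$, or the vanishing of $\mathrm{Proj}^1$ of an associated spectrum) genuinely hold for section spaces over an arbitrary second countable manifold $M$ rather than, say, only over $\R^n$ or over compact $M$. The potential subtlety is that $M$ is noncompact and only second countable, so $C^\infty(M)$ need not be nuclear in the most naive sense and the invariants must be verified in this generality; one resolves this by exhausting $M$ by relatively compact sets and passing to the associated reduced projective spectrum, to which the $\mathrm{Proj}^1$ formulation of \cite{DV} applies. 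Everything else is bookkeeping: continuity of $A$ and $B$, closedness of the image, and the translation between the module language of \eqref{cABMs} and the functional-analytic language of \cite{DV}.
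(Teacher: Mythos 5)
Your proposal is correct and follows essentially the same route as the paper: reduce to the exact complex $0\to\Ker A\to(\La^\infty_{p-1}(M;E))^k\to\La_p^\infty(M;E)\to\La_{p+k}^\infty(M;E)$, identify the section spaces as (complemented subspaces of) $\ss^\infty$ or $\ss$ using the Doma\'nski--Vogt representation of $C^\infty(M;E)$, and invoke their splitting theorem, with the last clause coming from Theorem \ref{thm1}. The only difference is one of packaging: what you describe as the ``hard part'' (verifying the linear-topological invariants for noncompact $M$) is handled in the paper by two ready-made lemmas, namely $C^\infty(M;E)\simeq\ss^\infty$ (resp.\ $\ss$ for compact $M$) from \cite{DV}, Theorem 2.4, and the fact that exact complexes of complemented subspaces of $\ss^\infty$ split (\cite{DJ}, Lemma 3.6).
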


The above theorem and Corollary \ref{cor1} imply that the complex \eqref{cABMs} also splits if all data are real analytic and $p<\dpt I^\omega_x(\Om)$, $x\in \Sing(\Om)$.

Note that for $p>\rank E$ the complex \eqref{cABMs} and Theorem \ref{thm2} are trivial.

\begin{remark} \rm
In the holomorphic category one can use the compact-open topology in the spaces of holomorphic functions and holomorphic sections of the bundle $E$ over a Stein manifold $M$, which makes them Fr\'echet spaces. In this case the technique presented in \cite{DJ}, Theorem 2.3, allows to obtain results analogous to the above theorem assuming additionally that $M$ has finitely many connected components and is strongly pseudoconvex or it satisfies the strong Liouville property.
\end{remark}

\begin{problem}[cf. \cite{DJ}, Problem 2.4]
Is the same true for any Stein manifold?
\end{problem}

\subsection{Bundles over $X$}

The same results can be stated for sections of analogous bundles over a closed subset $X$ of $M$. As before, if not stated otherwise, the manifold $M$, the bundle $E$, and functions, sections etc., are considered in a fixed category $C^r$ which is $C^\infty$, $C^\omega$ or $C^{hol}$. In the real analytic and holomorphic categories we additionally assume that $X$ is an analytic subset of $M$, i.e. it is locally defined as the set of zeros of a family of real analytic (resp. holomorphic) functions.

Here it will be more convenient to treat $\Lambda^r_p(M;E)$ as the module over the ring of functions $C^r(M)$. Denote by $C^r(M,X)$ the subring of $C^r(M)$ of $C^r$ functions vanishing on $X$ and let  $\Lambda^r_p(M,X;E)$ denote the module over $C^r(M)$ of $C^r$ sections of the $p$-th skew-symmetric power of $E$ which vanish on $X$. In particular, $C^r(M,X)=\Lambda^r_0(M,X;E)$. The spaces $\Lambda^r_p(M,X;E)$ are, in a natural way, modules over the ring $C^r(M)$.  Define the quotient rings
\begin{equation}\label{quotient ring}
C^r(X)=C^r(M)/C^r(M,X)=:\Lambda^r_0(X;E)
\end{equation}
and the quotient modules over $C^r(M)$, and also over $C^r(X)$,
\begin{equation}\label{quotient module}
\Lambda^r_p(X;E)=\Lambda^r_p(M;E)/\Lambda^r_p(M,X;E), \quad p\ge 1.
\end{equation}
Elements of $\Lambda^r_p(X;E)$, called $C^r$ \emph{sections} $\o:X\to \w^pE$, can be written in the form $\o_M+\Lambda^r_p(M,X;E)$, where $\o_M:M\to \w^pE$ is a section of class $C^r$. The exterior product factorizes to the quotient spaces \eqref{quotient module}. For any $x\in X$ the value $\o(x)$ in the fiber $E_x$ of $E$ is well defined. Similarly, one can write germs or local sections. In particular, a local basis $e_1,\dots,e_m$ of $E$ on $X$ is understood as a collection of pointwise linearly independent local sections $M\to E$ of class $C^r$, each taken modulo local elements in $\Lambda^r_1(M,X;E)$. Again, locally in $X$, the elements of $\Lambda^r_p(X;E)$ can be written as
\[
\eta=\sum_{1\le i_1<\cdots<i_p\le m}g_{i_1\cdots i_p}\,e_{i_1}\w\cdots\w e_{i_p},
\]
where  $g_{i_1\cdots i_p}$ are locally defined elements of $C^r(X)$.

Fix $1\leq k\leq m$ and consider elements  $\o_1,\dots,\o_k\in \Lambda^r_1(X;E)$. Again, we denote
\[
\Om=\o_1\w\cdots\w\o_k
\]
and the value $\Om(x)=\o_1(x)\w\cdots\w\o_k(x)$ is well defined for $x\in X$. The elements $\o_1,\dots,\o_k$ define linear operators $A_X$, $B_X$ in the diagram
\begin{equation}\label{opABXc}
(\La^r_{p-1}(X;E))^k\ \mathop{\longrightarrow}^{A_X}\ \La^r_p(X;E)\ \mathop{\longrightarrow}^{B_X}\ \La_{p+k}^r(X;E),
\end{equation}
where
\begin{equation}\label{opABX}
A_X(\g_1,\dots,\g_k)=\sum \o_j\w\g_j\ \ \mathrm{and}\ \  B_X\eta=\Om\w\eta.
\end{equation}
As before, $B_XA_X=0$ thus $\im A_X\subset \Ker B_X$ and the diagram defines a complex. We may ask the question when it is exact, i.e., $\im A_X=\Ker B_X$.

The \emph{singular set} of $\Om$, $\Sing(\Om)=\{x\in X\ |\ \Om(x)=0\}$ is now a subset of $X$. For a fixed $x\in X$ we can choose a basis of $C^r$ sections $e_1,\dots,e_m:X\to E$ defined in a neighbourhood of $x$ and write
\[
\Om=\sum_{1\le i_1<\cdots<i_k\le m}f_{i_1\cdots i_k}\,e_{i_1}\w\cdots\w e_{i_k},
\]
where the coefficients are locally defined elements of $C^r(X)$.

Analogously to \eqref{quotient ring} define the quotient ring $C^r_x(X)=C^r_x(M)/C^r_x(M,X)$, where $C^r_x(M)$ is the ring of germs at $x$ of local functions of class $C^r$ on $M$ and $C^r_x(M,X)$ the ring of germs of such functions vanishing on $X$. We denote by
\begin{equation}\label{local ideal}
I^r_{X,x}(\Om)\subset C^r_x(X)
\end{equation}
the ideal of $C^r_x(X)$ generated by the coefficients $f_{i_1\cdots i_k}$ of the above expansion. (If $x\not\in \Sing(\Om)$ then this ideal coincides with the ring $C_x^r(X)$.)

\begin{theorem}\label{thm3}
Let $X\subset M$ be a closed subset which is real analytic and coherent, in the case of real analytic category, and complex analytic in the case of holomorphic category. Then in all three categories the complex \eqref{opABXc} is exact for each $p\ge 1$ satisfying
\begin{equation}\label{depth2}
p< \dpt I^r_{X,x}(\Om)\ \ \forall\, x\in \Sing(\Om)\subset X.
\end{equation}
\end{theorem}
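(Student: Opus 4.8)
The plan is to run the same local-to-global scheme as for Theorem \ref{thm1}, with the manifold $M$ replaced by the set $X$ and the germ ring $C^r_x(M)$ replaced by the quotient ring $C^r_x(X)$. Since $B_XA_X=0$ we have $\im A_X\subset\Ker B_X$ for free, so the whole content is the reverse inclusion: given $\eta\in\La^r_p(X;E)$ with $\Om\w\eta=0$, I must produce $\g_1,\dots,\g_k\in\La^r_{p-1}(X;E)$ with $\eta=\sum\o_j\w\g_j$.

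First I would establish the germ version at every $x\in X$. If $x\notin\Sing(\Om)$, then $\o_1,\dots,\o_k$ are pointwise independent near $x$, $I^r_{X,x}(\Om)$ is the whole ring $C^r_x(X)$, and local solvability is the elementary part-of-a-basis case. If $x\in\Sing(\Om)$, I would apply Saito's theorem (Theorem \ref{thmSaito}) to the commutative ring $R=C^r_x(X)$ and the free $R$-module $\MM$ of germs at $x$ of sections of $E$ over $X$, free of rank $m=\rank E$ by local triviality of $E$. For this $R$ the ideal of Theorem \ref{thmSaito} is exactly $I^r_{X,x}(\Om)$, and $p<\dpt I^r_{X,x}(\Om)$ by \eqref{depth2}, so Theorem \ref{thmSaito} supplies the germs $\g_j$. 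Hence the complex of sheaves of germs of $C^r$ sections over $X$ attached to \eqref{opABXc} is exact at its middle term.

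It then remains to globalize, and here the categories split. In the $C^\infty$ case the argument is elementary and needs no cohomology: cover $X$ by neighbourhoods $U_\alpha$ in $M$ on which local solutions $\g^\alpha=(\g^\alpha_1,\dots,\g^\alpha_k)$ with $\eta=A_X\g^\alpha$ exist, choose a smooth partition of unity $\{\rho_\alpha\}$ subordinate to this cover, and set $\g=\sum_\alpha\rho_\alpha\g^\alpha$ (each $\rho_\alpha\g^\alpha$ extended by zero). Since $A_X$ is $C^\infty(X)$-linear, $A_X\g=\sum_\alpha\rho_\alpha A_X\g^\alpha=\sum_\alpha\rho_\alpha\eta=\eta$, solving the problem globally. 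In the real analytic and holomorphic categories there are no partitions of unity, so one passes to sheaf cohomology: writing $\mathcal{A}^r_q$ for the sheaf of germs of $C^r$ sections of $\w^qE$ over $X$ and $\mathcal{K}=\Ker A_X$, $\mathcal{Z}=\im A_X=\Ker B_X$ for the associated subsheaves, the short exact sequence $0\to\mathcal{K}\to(\mathcal{A}^r_{p-1})^k\to\mathcal{Z}\to 0$ yields a global-sections sequence in which the surjectivity of $\Gamma((\mathcal{A}^r_{p-1})^k)\to\Gamma(\mathcal{Z})=\{\eta:\Om\w\eta=0\}$, i.e. exactness of \eqref{opABXc}, is controlled by $H^1(X,\mathcal{K})$.

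The main obstacle is thus the vanishing $H^1(X,\mathcal{K})=0$ in the analytic cases. I would deduce it from two facts: that $\mathcal{K}$, as the kernel of a morphism of coherent sheaves, is coherent; and Cartan's Theorem B. In the holomorphic case $X$ is a closed complex analytic subset of the Stein manifold $M$, hence Stein, and Theorem B gives the vanishing directly; the coherence of the sheaves in play is automatic (Oka--Cartan). In the real analytic case the coherence of $\mathcal{K}$ rests on coherence of the structure sheaf $\mathcal{C}^\omega_X=\mathcal{C}^\omega_M/\mathcal{I}_X$, which holds precisely because the ideal sheaf $\mathcal{I}_X$ is coherent --- this is exactly the hypothesis that $X$ be a coherent real analytic set --- after which the real analytic analogue of Theorem B on $M$ (with $\mathcal{K}$ viewed as a coherent sheaf supported on $X$) furnishes the vanishing. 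Checking these coherence statements and the cohomology vanishing is the technical heart of the argument; everything else is formal.
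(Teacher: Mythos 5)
Your proposal follows essentially the same route as the paper: local exactness at each germ via Saito's theorem applied to the ring $C^r_x(X)$ and the free module of section germs (the paper's Lemma \ref{lem-germs}(a)), then globalization by partition of unity in the smooth case and by coherence of $\Ker A_X$ plus Cartan's Theorem B (with the coherence hypothesis on $X$ entering exactly where you place it) in the analytic cases, which is the content of the paper's Proposition \ref{prop1} and Lemma \ref{lem-local-global}. The only cosmetic difference is that you package the $H^1$ obstruction through the long exact sequence of $0\to\Ker A_X\to(\mathcal{A}^r_{p-1})^k\to\im A_X\to 0$, whereas the paper runs the equivalent \v{C}ech cocycle argument explicitly.
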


Recall that a sheaf $\FF$ of modules over a sheaf of rings $\RR$ is called \emph{coherent} if it satisfies two conditions: (a) it is of finite type (i.e., locally it is generated by a finite family of sections); (b) given any finite family of sections $s_1,\dots,s_k$ of $\FF$ over an open subset $U\subset M$, the sheaf of relations, i.e. of elements $(f_1,\dots,f_k)\in (\RR(U))^k$ such that $f_1s_1+\cdots+f_ks_k=0$, is of finite type.

An analytic subset $X\subset M$ of a real analytic (respectively, holomorphic, Stein) manifold $M$ is called {\it coherent} if the sheaf $\OO(M,X)$ of real analytic (respectively, holomorphic) function germs  vanishing on $X$ is coherent (over the sheaf $\OO$). This is equivalent to say that the sheaf of function germs vanishing on $X$ is of finite type (the sheaf of relations is always of finite type). All analytic subsets of Stein manifolds as well as every real analytic manifold are coherent by a theorem of Oka (see \cite{C}, Proposition 4). For more information on coherence and on real analytic coherent sets see \cite{N}.

As earlier, the following fact may be helpful.

\begin{corollary}\label{cor2}
If in Theorem \ref{thm3} the assumptions hold in the real analytic category, together with the depth condition \eqref{depth2}, then the complex \eqref{opABXc} is also exact in the smooth category.
\end{corollary}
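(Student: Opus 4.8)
The plan is to reduce the corollary to a depth comparison between the analytic and the smooth germ rings and then to invoke Theorem \ref{thm3} in the category $C^\infty$. Observe first that a real analytic coherent set is in particular a closed subset of $M$, so the hypotheses imposed on $X$ by Theorem \ref{thm3} in the smooth category (where only closedness of $X$ is required) hold automatically. Observe also that $\Sing(\Om)=\{x\in X : \Om(x)=0\}$ is determined by the pointwise values of the analytic sections $\o_i$, hence is the same set in both categories, and that for $x\notin\Sing(\Om)$ the ideal $I^r_{X,x}(\Om)$ equals the whole ring $C^r_x(X)$, so no condition is imposed there. It therefore suffices to show, for every $x\in\Sing(\Om)$, the implication
\[
p<\dpt I^\omega_{X,x}(\Om)\ \Longrightarrow\ p<\dpt I^\infty_{X,x}(\Om),
\]
i.e. that replacing analytic germs by smooth germs does not lower the depth of the ideal generated by the coefficients $f_{i_1\cdots i_k}$ of $\Om$.

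The tool for this is the flatness theorem of Malgrange, which asserts that $C^\infty_x(M)$ is (faithfully) flat over $C^\omega_x(M)$ (the case $k=1$ is treated in \cite{DJ}). The point is to transport this flatness to the quotient rings $C^r_x(X)=C^r_x(M)/C^r_x(M,X)$, and this is exactly where coherence of $X$ enters. Coherence guarantees that the ideal of smooth germs vanishing on $X$ is generated by analytic germs vanishing on $X$, that is
\[
C^\infty_x(M,X)=C^\omega_x(M,X)\cdot C^\infty_x(M).
\]
Granting this, base change gives
\[
C^\infty_x(X)\simeq C^\infty_x(M)\otimes_{C^\omega_x(M)}C^\omega_x(X),
\]
and since flatness is stable under base change, $C^\infty_x(X)$ is flat over $C^\omega_x(X)$.

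To finish, choose a regular sequence $a_1,\dots,a_r\in I^\omega_{X,x}(\Om)$ of length $r=\dpt I^\omega_{X,x}(\Om)>p$ in $C^\omega_x(X)$. Flatness of $C^\infty_x(X)$ over $C^\omega_x(X)$ preserves regular sequences: tensoring each injection $C^\omega_x(X)/(a_1,\dots,a_{i-1})\xrightarrow{\,a_i\,}C^\omega_x(X)/(a_1,\dots,a_{i-1})$ with $C^\infty_x(X)$ keeps it injective, and since the $f_{i_1\cdots i_k}$, hence the $a_i$, vanish at $x\in\Sing(\Om)$, the extended ideal stays proper. As $I^\infty_{X,x}(\Om)=I^\omega_{X,x}(\Om)\cdot C^\infty_x(X)$ contains the images of the $a_i$, we obtain $\dpt I^\infty_{X,x}(\Om)\ge r>p$, so \eqref{depth2} holds in the smooth category and Theorem \ref{thm3} in $C^\infty$ gives exactness of \eqref{opABXc}. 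I expect the \emph{main obstacle} to be the identity $C^\infty_x(M,X)=C^\omega_x(M,X)\cdot C^\infty_x(M)$: the inclusion $\supseteq$ is immediate, whereas $\subseteq$ — that a smooth germ vanishing on the analytic set $X$ lies in the smooth ideal generated by analytic germs vanishing on $X$ — requires coherence of $X$ together with a \L{}ojasiewicz-type division argument, and it is precisely this inclusion that fails for non-coherent $X$.
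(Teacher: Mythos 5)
Your argument is correct, but it takes a genuinely different route from the paper's. The paper proves Corollary \ref{cor2} by rerunning the proof of Theorem \ref{thm3} with part (b) of Lemma \ref{lem-germs} in place of part (a): Malgrange's flatness of $C^\infty_x(M)$ over $C^\omega_x(M)$ (\cite{Ml}, Ch.~VI) is used to transfer \emph{exactness of the germ complex} \eqref{cABX-germs} from the analytic to the smooth category, after which Lemma \ref{lem-local-global} (partitions of unity) globalizes. You instead use the same flatness to transfer the \emph{depth hypothesis}: flat base change preserves regular sequences and keeps the extended ideal proper (since the coefficients of $\Om$ vanish at $x\in\Sing(\Om)$), so $\dpt I^\infty_{X,x}(\Om)\ge\dpt I^\omega_{X,x}(\Om)>p$, and then the smooth-category case of Theorem \ref{thm3} applies verbatim. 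Both routes rest on the same two inputs, and the identity $C^\infty_x(M,X)=C^\omega_x(M,X)\cdot C^\infty_x(M)$ that you isolate as the main obstacle is indeed the delicate point for coherent $X$ --- but note that the paper's own Lemma \ref{lem-germs}(b) needs exactly the same identity (implicitly) in order to recognize the tensored analytic quotient complex as the smooth quotient complex, so you are not assuming more than the paper does; you are merely making the dependency explicit. As for what each approach buys: the paper's version uses flatness only to preserve exactness of a single three-term complex, which is the minimal commutative-algebra input and avoids any discussion of depth in the non-Noetherian smooth ring; your version proves the stronger and independently useful fact that the depth of the coefficient ideal does not decrease when passing from analytic to smooth germs, which also directly substantiates the paper's remark that the depth condition may be checked in the Noetherian analytic ring. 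The price is the (standard, but worth writing out) verification that regular sequences survive flat base change via $S/(a_1,\dots,a_{i-1})S\simeq S\otimes_R R/(a_1,\dots,a_{i-1})$.
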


For solving the splitting problem in $C^\infty$ category we consider the complex
\begin{equation}\label{cABX}
(\La^\infty_{p-1}(X;E))^k\ \mathop{\longrightarrow}\limits^{A_X}\ \La_p^\infty(X;E)\ \mathop{\longrightarrow}\limits^{B_X}\ \La_{p+k}^\infty(X;E).
\end{equation}
All three spaces above have well defined $C^\infty$ topologies and are Fr\'echet spaces as quotients of Fr\'echet spaces with $C^\infty$ topologies. Clearly, the operators $A_X$ and $B_X$ given by \eqref{opABX} are linear and continuous.

Recall that a closed subset $X\subset M$ has the \emph{extension property} (in the category $C^\infty$) if there exists a linear continuous operator $S:C^\infty(X)\to C^\infty(M)$ such that $S(f)\vert_X=f$ or, more precisely, the natural quotient map $Q:C^\infty(M)\to C^\infty(X)$ has a continuous linear right inverse. If $M$ is a real analytic manifold then all closed semianalytic subsets $X\subset M$ as well as all coherent closed subanalytic subsets have the extension property, see \cite{BS}. In Sections 0 and 7 of \cite{BS} the reader can find more general classes of sets with the extension property, as well as an example where the property fails.

\begin{theorem}\label{thm4}
If $M$ and $E$ are smooth, $\o_1,\dots,\o_k\in \La_1^\infty(X;E)$, the closed subset $X\subset M$ has the extension property, and the complex \eqref{cABX} is exact for a $p\ge 1$ then it splits. In particular, it is exact and splits if $p<\dpt I_x^\infty(\Om)$ for $x\in \Sing(\Om)$.
\end{theorem}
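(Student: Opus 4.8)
\textbf{Proof plan for Theorem \ref{thm4}.}
The plan is to reduce the splitting problem over $X$ to the already-solved splitting problem over $M$ (Theorem \ref{thm2}), using the extension property of $X$ to transfer the continuous right inverse. First I would fix smooth sections $\o_1^M,\dots,\o_k^M$ of $E$ over $M$ representing $\o_1,\dots,\o_k\in\La_1^\infty(X;E)$, and note that they induce operators $A$, $B$ over $M$ as in \eqref{cABMs}, fitting into a commutative diagram with the quotient maps $Q_p:\La_p^\infty(M;E)\to\La_p^\infty(X;E)$ (which are continuous, linear, surjective, with kernel $\La_p^\infty(M,X;E)$) and with $A_X$, $B_X$ over $X$. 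The key point is that these quotient maps intertwine the operators: $Q_p\circ A=A_X\circ(Q_{p-1})^k$ and $Q_{p+k}\circ B=B_X\circ Q_p$, because exterior multiplication by the $\o_j$ factorizes to the quotients. Assuming \eqref{cABX} is exact, I want to produce a continuous linear right inverse $A_X^+:\im A_X\to(\La_{p-1}^\infty(X;E))^k$.

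The main construction goes as follows. By the extension property there is a continuous linear right inverse $S:C^\infty(X)\to C^\infty(M)$ to the scalar quotient map; since $\La_p^\infty(X;E)$ is a finite direct sum (over a local basis, but globally via a partition of unity or via the finite rank of $\w^pE$) of copies of $C^\infty(X)$, this $S$ induces a continuous linear section $S_p:\La_p^\infty(X;E)\to\La_p^\infty(M;E)$ of $Q_p$. Given $\eta\in\im A_X\subset\La_p^\infty(X;E)$, I would lift it to $\tilde\eta=S_p(\eta)\in\La_p^\infty(M;E)$. The difficulty is that $\tilde\eta$ need not lie in $\Ker B=\im A$ over $M$, so I cannot directly apply $A^+$ from Theorem \ref{thm2}. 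The fix is that $B\tilde\eta=\Om\w\tilde\eta$ vanishes on $X$ (because $B_X\eta=0$ by exactness over $X$, and $Q_{p+k}B\tilde\eta=B_XQ_p\tilde\eta=B_X\eta=0$), so $B\tilde\eta\in\La_{p+k}^\infty(M,X;E)$; I then need to correct $\tilde\eta$ by an element vanishing on $X$ to land in $\Ker B$, and this requires that the analogous splitting/exactness hold \emph{relative to} the submodule of sections vanishing on $X$.

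Concretely, the heart of the proof is to establish a splitting statement for the subcomplex of sections vanishing on $X$,
\[
(\La_{p-1}^\infty(M,X;E))^k\ \mathop{\longrightarrow}\limits^{A}\ \La_p^\infty(M,X;E)\ \mathop{\longrightarrow}\limits^{B}\ \La_{p+k}^\infty(M,X;E),
\]
and to combine it with Theorem \ref{thm2} over $M$ in a short exact sequence of complexes
\[
0\to\La_\bullet^\infty(M,X;E)\to\La_\bullet^\infty(M;E)\to\La_\bullet^\infty(X;E)\to0.
\]
Here the extension property guarantees that this sequence of Fr\'echet spaces splits as topological vector spaces (the operators $S_p$ provide continuous linear, though non-equivariant, splittings). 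The strategy is a diagram chase at the level of continuous right inverses: using $A^+$ over $M$ from Theorem \ref{thm2}, the section $S_p$, and a continuous right inverse for the vanishing-on-$X$ complex, one assembles $A_X^+$ by the usual mapping-cone / five-lemma argument for splitting operators, checking continuity at each stage since all maps involved ($A$, $B$, $A^+$, $Q_p$, $S_p$) are continuous between Fr\'echet spaces. The main obstacle is precisely the vanishing-on-$X$ splitting: I expect it to follow from the same Koszul/Saito mechanism applied to germs at points of $X$, using coherence of $\OO(M,X)$ to control the sheaf of relations and the Doma\'nski--Vogt criterion \cite{DV} to pass from local splitting to a global continuous right inverse; the role of the depth hypothesis $p<\dpt I_x^\infty(\Om)$ at points $x\in\Sing(\Om)$ is to guarantee the local exactness (via Lemma \ref{lem-germs} and Theorem \ref{thmSaito}) that feeds this machinery, while away from $\Sing(\Om)$ the complex is locally exact for classical reasons. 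The final clause of the theorem, that exactness itself holds under the depth condition, is then immediate from Theorem \ref{thm3}.
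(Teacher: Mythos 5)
There is a genuine gap, and it sits at the very first step of your reduction. You propose to choose smooth representatives $\o_1^M,\dots,\o_k^M$ over $M$ and invoke Theorem \ref{thm2} to get a continuous right inverse $A^+$ for the operator $A$ over $M$. But Theorem \ref{thm2} requires the complex \eqref{cABMs} over $M$ to be \emph{exact}, and that is not a hypothesis of Theorem \ref{thm4}: the theorem only assumes exactness of \eqref{cABX} over $X$, and for arbitrary extensions $\o_j^M$ there is no reason for exactness (or for the depth condition) to hold at points of $M$ off $X$. The second load-bearing ingredient of your plan, the splitting of the subcomplex of sections vanishing on $X$, is explicitly left as ``the main obstacle'' with only the expectation that ``the same Koszul/Saito mechanism'' will produce it; that subcomplex need not even be exact under the stated hypotheses, and no argument for a \emph{continuous} right inverse there is given. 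So what you have is a reduction of the theorem to two statements, one of which is false as a step (no $A^+$ over $M$ is available) and the other of which is unproven and at least as hard as the original claim.

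The missing idea is that no comparison with the complex over $M$ at the level of the operators $A$, $B$ is needed at all. The paper's proof is purely functional-analytic: the extension property enters only through Lemma \ref{lem-split0} and Lemma \ref{lem-split1}, which show that $C^\infty(X;E)$ (hence each space $\La_q^\infty(X;E)$) is isomorphic to a complemented subspace of $\ss^\infty$ (or of $\ss$ when $M$ is compact), because the continuous extension operator $L$ yields a continuous projection $P=LQ$ splitting $C^\infty(M;E)=C^\infty(M,X;E)\oplus\im L$. Once all three spaces in \eqref{cABX} are complemented subspaces of $\ss^\infty$, Lemma \ref{lem-split2} (the Doma\'nski--Vogt--type splitting theorem) applies directly to the exact complex
$0\to\Ker A_X\to(\La^\infty_{p-1}(X;E))^k\to\La_p^\infty(X;E)\to\La_{p+k}^\infty(X;E)$
and produces the continuous linear right inverse of $A_X$ on $\im A_X=\Ker B_X$. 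Your instinct to use $S_p$ to identify $\La_p^\infty(X;E)$ with a complemented subspace is exactly the right use of the extension property; the step you are missing is to feed that complemented-subspace structure into the abstract splitting lemma rather than into a diagram chase against the complex over $M$. The final clause (exactness under the depth condition) is indeed immediate from Theorem \ref{thm3}, as you say.
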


\begin{corollary}\label{cor3}
\noindent If $M$ and $E$ are $C^\omega$, $\o_1,\dots,\o_k\in \La_1^\omega(X;E)$, and the set $X$ is a closed coherent analytic subset of $M$, then the complex \eqref{cABX} is exact and it splits for any $p\ge 1$ satisfying $p<\dpt I_{X,x}^\omega(\Om)$ for all $x\in \Sing(\Om)\subset X$.
\end{corollary}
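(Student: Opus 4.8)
The plan is to obtain Corollary \ref{cor3} by combining the exactness statement of Corollary \ref{cor2} with the splitting statement of Theorem \ref{thm4}, so that no genuinely new computation is required; the work lies entirely in checking that the hypotheses of these two results are met by the given real analytic data.

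First I would establish exactness of the smooth complex \eqref{cABX}. Since $M$ and $E$ are real analytic, $X$ is a closed coherent analytic subset of $M$, and the depth inequality $p<\dpt I^\omega_{X,x}(\Om)$ holds for all $x\in\Sing(\Om)$, the hypotheses of Theorem \ref{thm3} are satisfied in the category $C^\omega$, so that theorem yields exactness of \eqref{opABXc} in the real analytic category. Corollary \ref{cor2} then transports this exactness to the smooth category, i.e. the complex \eqref{cABX} is exact. This disposes of the exactness half of the assertion and simultaneously supplies the exactness hypothesis required by Theorem \ref{thm4}.

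Next I would verify the remaining hypotheses of Theorem \ref{thm4}. The manifold $M$ and the bundle $E$, being real analytic, are in particular smooth, and the sections $\o_1,\dots,\o_k\in\La_1^\omega(X;E)$ are in particular elements of $\La_1^\infty(X;E)$. The crucial point is the extension property of $X$: a closed analytic subset of a real analytic manifold is locally the common zero set of finitely many analytic functions, hence is a closed semianalytic subset, and by \cite{BS} every closed semianalytic subset of a real analytic manifold has the extension property in the $C^\infty$ category. With all hypotheses in place, Theorem \ref{thm4} applies and shows that the smooth complex \eqref{cABX} splits, completing the proof.

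I expect the verification of the extension property to be the only point demanding attention: one must be sure that the condition \emph{closed coherent analytic} used to drive exactness via Theorem \ref{thm3} indeed entails the condition \emph{closed semianalytic} that guarantees the extension property via \cite{BS}. Since analytic sets are automatically semianalytic, coherence is needed only for the exactness argument and plays no role in the extension property itself, so this implication is immediate and no deeper analysis is necessary; the proof therefore reduces to matching hypotheses between the two cited results.
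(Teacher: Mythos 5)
Your proposal is correct and follows essentially the same route as the paper: exactness comes from Corollary \ref{cor2} (via Theorem \ref{thm3} in the $C^\omega$ category), and splitting from the first statement of Theorem \ref{thm4} once the extension property of $X$ is secured from \cite{BS}. Your observation that the depth hypothesis is on $I^\omega_{X,x}(\Om)$, so one must route exactness through the analytic category rather than invoking the second statement of Theorem \ref{thm4} directly, is exactly the point the paper's one-line proof relies on.
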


This corollary follows directly from the above theorem and Corollary \ref{cor2} since a closed coherent analytic subset of analytic $M$ has the extension property.

\begin{remark}\rm
If $E$ is a trivial bundle, $M$, $E$, $\o_1,\dots,\o_k$ are real analytic, and $X\subseteq M$ is a closed semicoherent subanalytic (in particular, semianalytic or subanalytic and coherent) subset then the operator $A_X$ in \eqref{cABX} has a continuous linear right inverse even if the complex is  not exact (see \cite{BS}, Theorem 0.1.3 and Corollary 8.4).
\end{remark}

\section{Local and global exactness of sheaf complexes}\label{s-sheaves}

For proving Theorems \ref{thm1} and \ref{thm2} we need an elementary result in sheaf theory. Consider a metrizable topological space $X$ which has the Lindel\"of property, i.e. any open cover of $X$ has a countable subcover.  Let $\FF$, $\GG$, and $\HH$ be sheaves of modules over $X$ and consider a complex
\begin{equation}\label{c-sheaves}
\FF \mathop{\longrightarrow}\limits^A \GG \mathop{\longrightarrow}\limits^B \HH,
\end{equation}
where $A$ and $B$ are morphisms of sheaves of modules. Such a complex is called exact if the corresponding complex of fiber morphisms
\begin{equation}\label{c-fibers}
\FF_x \mathop{\longrightarrow}\limits^{A_x} \GG_x \mathop{\longrightarrow}\limits^{B_x} \HH_x
\end{equation}
is exact for any $x\in X$. Recall that any fiber, say $\FF_x$, can be identified with the module of germs at $x$ of local sections of $\FF$ (over a ring $R_x$).

The morphisms $A$ and $B$ define homomorphisms of modules of global sections
\begin{equation}\label{c-modules}
\FF(X) \mathop{\longrightarrow}\limits^{A_X} \GG(X) \mathop{\longrightarrow}\limits^{B_X} \HH(X),
\end{equation}
which also forms a complex, i.e., $B_XA_X=0$.

Recall that a sheaf of rings $\RR$ (respectively, o sheaf of modules $\FF$)  is called \emph{soft} if for any closed subset $S\subset X$ the restriction map $\RR(X)\to \RR(S)$ (resp. $\FF(X)\to \FF(S)$) is surjective. All rings will be assumed commutative with unity. If $X$ is metrizable then the following separability condition is sufficient for softness of $\RR$  and, more generally, for softness of any sheaf of modules over $\RR$, (cf. \cite{GR}, Chapter A, $\S$ 4, Theorem 2).

\medskip
\it Property (P). For any closed subset $S\subset X$ and any open neighbourhood $W$ of $S$ in $X$ there is a global section $\varphi\in \RR(X)$ such that $\varphi\vert_{S}=1$ and $\varphi\vert_{X\setminus W}=0$. \rm
\medskip

Note that any smooth manifold $M$ and its sheaf of germs of $C^\infty$ functions has this property, as well as any closed subset $X$ and its sheaf of rings defined by the sections in $C^\infty(X\cap U)=C^\infty(U)/C^\infty(U,X\cap U)$, for open subsets $U\subset M$.

We will say that a sheaf of unital rings $\RR$ on $X$ has \emph{the partition of unity property} if for any open cover $\{U_i\}$ of $X$ there is a family of global sections $\varphi_i\in\RR(X)$ which satisfies: (i) $\supp\varphi_i\subset U_i$; (ii) it is locally finite, i.e. for any $x\in X$ there is a neighbourhood of $x$ in which all but finitely many $\varphi_j$ vanish; (iii) $\sum_i\varphi_i(x)=1_x$ for all $x\in X$ (here $1_x$ denotes the unit in the fiber ring $R_x$).

In the analytic categories the partition of unity can not be used. Instead, we will use the following properties of a sheaf of modules $\FF$ over a sheaf of rings $\RR$.

\medskip
\it Property (A).
For any $x\in X$ and any germ $s_x\in \FF_x$ there is a finite number of global sections $s_1,\dots,s_q\in \FF(X)$ such that $s_x=\sum a_is_{i,x}$, where $s_{i,x}$ are values (equivalently, germs) at $x$ of $s_i$ and $a_i$ are elements of $R_x$. \rm

\medskip
\it Property (B).
The first cohomology group $H^1(\FF)$ of $\FF$ vanishes.
 \rm \medskip

\begin{proposition}\label{prop1}
(a) If the sheaf $\RR$ has the partition of unity property or the subsheaf $\Ker A\subset \FF$ has the property (B) then exactness of the complexes of modules of germs \eqref{c-fibers} at any $x\in X$ implies exactness of the complex of global modules \eqref{c-modules}.

\noindent (b) If the sheaf of rings $\RR$ has the property (P) or the subsheaf $\Ker B\subset \GG$ has the property (A) then exactness of the complex of modules \eqref{c-modules} implies exactness of the complexes of modules \eqref{c-fibers}
\end{proposition}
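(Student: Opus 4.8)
The plan is to prove each of the two implications by a patching or lifting argument, treating the four stated hypotheses as four cases---two for part (a) and two for part (b). The $\RR$-linearity of $A$ and $B$ will be used throughout, together with the fact that a metrizable $X$ is paracompact and, by the Lindel\"of property, admits countable covers.

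For part (a) I would start from a global section $g\in\GG(X)$ with $B_Xg=0$ and seek $f\in\FF(X)$ with $A_Xf=g$. Exactness of the stalk complexes \eqref{c-fibers} says precisely that for each $x$ the germ $g_x$, which satisfies $B_xg_x=0$, lies in $\im A_x$; hence there is an open cover $\{U_i\}$ of $X$ and local sections $f_i\in\FF(U_i)$ with $Af_i=g$ on $U_i$, and by Lindel\"of I may take the cover countable. If $\RR$ has the partition of unity property, choose $\varphi_i\in\RR(X)$ subordinate to $\{U_i\}$, locally finite, with $\sum_i\varphi_i=1$. Then $f:=\sum_i\varphi_if_i$ is a well-defined global section of $\FF$ and $A_Xf=\sum_i\varphi_iAf_i=\sum_i\varphi_ig=g$, which settles this case in one line.

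The other case of part (a), where $\Ker A$ has property (B), is the cohomological one. On each overlap $U_i\cap U_j$ we have $A(f_i-f_j)=g-g=0$, so the differences $f_i-f_j$ define a \v{C}ech $1$-cocycle with values in the subsheaf $\Ker A$. Since metrizability gives paracompactness and the cover is countable, \v{C}ech cohomology computes sheaf cohomology in degree one, so $H^1(\Ker A)=0$ forces this cocycle to be a coboundary: there are $h_i\in(\Ker A)(U_i)$ with $f_i-f_j=h_i-h_j$ on overlaps. Then the sections $f_i-h_i$ agree on overlaps and glue to a global $f\in\FF(X)$, and $A_Xf=Af_i=g$ on each $U_i$, hence everywhere. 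For part (b) I would fix $x\in X$ and a germ $g_x\in\GG_x$ with $B_xg_x=0$, represented by some $g\in\GG(U)$ on a neighbourhood $U$ on which $Bg=0$, and produce $f_x\in\FF_x$ with $A_xf_x=g_x$. If $\RR$ has property (P), pick a closed neighbourhood $S$ of $x$ and an open $W$ with $S\subset W$ and $\bar W\subset U$, and a cut-off $\varphi\in\RR(X)$ with $\varphi|_S=1$, $\varphi|_{X\setminus W}=0$. Then $\tilde g:=\varphi g$, extended by zero outside $U$, is a global section with $B_X\tilde g=0$ (being $\varphi Bg=0$ on $U$ and zero elsewhere); global exactness yields $\tilde f\in\FF(X)$ with $A_X\tilde f=\tilde g$, and since $\tilde g=g$ near $x$ we get $A_x(\tilde f)_x=g_x$. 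If instead $\Ker B$ has property (A), apply it to the germ $g_x\in(\Ker B)_x$: there are global sections $s_1,\dots,s_q\in(\Ker B)(X)$ and $a_i\in R_x$ with $g_x=\sum_i a_is_{i,x}$. Each $s_i$ satisfies $B_Xs_i=0$, so global exactness gives $f_i\in\FF(X)$ with $A_Xf_i=s_i$, and then $f_x:=\sum_i a_if_{i,x}$ satisfies $A_xf_x=\sum_i a_is_{i,x}=g_x$ by $R_x$-linearity of $A_x$.

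The routine direction is (b): there the hypotheses are tailored precisely to convert a single local datum into a global one and back, and once $g$ has been turned into a global $\Ker B$-section the rest is bookkeeping with germs. The main obstacle is the property-(B) case of part (a), where one must justify that the gluing obstruction genuinely lives in $H^1(X,\Ker A)$ and that, on our metrizable Lindel\"of $X$, the vanishing of this sheaf cohomology group is detected by a \v{C}ech cocycle on the countable cover at hand; the partition-of-unity case, by contrast, needs nothing beyond $\RR$-linearity.
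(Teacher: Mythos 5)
Your proposal is correct and follows essentially the same route as the paper's proof: the same four-case split, the same partition-of-unity gluing and \v{C}ech $1$-cocycle argument with values in $\Ker A$ for part (a), and the same globalization of a germ of $\Ker B$ via a cut-off (softness from property (P)) or via property (A) for part (b). Your handling of the property-(A) case, applying global exactness to each $s_i$ and recombining with the stalk coefficients $a_i\in R_x$, is in fact slightly more careful than the paper's phrasing, but it is not a different argument.
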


Even if the proposition is hardly novel, we present its detailed proof since it provides additional insight into our results.

\begin{proof}
We first prove that exactness of $\eqref{c-fibers}$ implies exactness of $\eqref{c-modules}$. Suppose that  $\beta\in \Ker B_X$. We have to prove that there exists $\alpha\in \FF(X)$ such that
\begin{equation}\label{global-ba}
\beta=A_X\alpha.
\end{equation}
Let $\beta_x$ denote the germ of $\beta$ at $x\in X$. Then $\beta_x\in \Ker B_x$ and exactness of $\eqref{c-fibers}$ implies that there is a germ $\alpha_x$ such that
\begin{equation}\label{germs-ba}
\beta_x=A_x\alpha_x.
\end{equation}
Having $\alpha_x$ at any $x\in X$ we will construct a global $\alpha$ satisfying \eqref{global-ba}. Each equality of germs \eqref{germs-ba} can be replaced by an equality of sections defined in neighbourhoods $U(x)$ of $x$, $\beta\vert_{U(x)}=A\alpha_{U(x)}$, where $\alpha_{U(x)}$ is a local section which represents the germ $\alpha_x$. Since $X$ has the Lidel\"of property, from the open cover $\{U_x\}_{x\in X}$ we may select a countable subcover $\{U_i\}$ of $X$ and, denoting $\alpha_i=\alpha_{U_i}$, we may write
\begin{equation}\label{family-ba}
\beta\vert_{U_i}=A\alpha_i.
\end{equation}

There is no reason that $\alpha_i$ and $\alpha_j$ coincide on $U_i\cap U_j$. In the case of the sheaf of rings $\RR$ having the partition of unity property we may choose a partition subordinate to the cover $\{U_i\}$ which is a locally finite family $\{\varphi_i\}$ of sections of $\RR$ such that $\sum\varphi_i=1_X$, where $1_X$ is the unity section in $\RR(X)$. Define
\[
\alpha=\sum_i\varphi_i\alpha_i.
\]
Such $\alpha$ has the desired property as
\[
A_X\alpha=A_X(\sum\varphi_i\alpha_i)=\sum\varphi_iA_X\alpha_i=\sum\varphi_i\beta\vert_{U_i}=\beta.
\]

If there is no partition of unity we proceed differently. In order to suitably modify $\alpha_i$ we denote $U_{ij}=U_i\cap U_j$ and define the cochain of local sections $\g_{ij}:U_{ij}\to \FF$,
\[
\g_{ij}(x)=\alpha_i(x)-\alpha_j(x), \ \ x\in U_{ij}.
\]
It follows from \eqref{family-ba} that $\g_{ij}:U_{ij}\to \Ker A$, thus $\g_{ij}$ are local sections of the subsheaf $\Ker A\subset \FF$.
The cochain is a cocycle since $\g_{ij}+\g_{jk}+\g_{ki}=0$ on $U_i\cap U_j\cap U_k$. It follows from property (B) of the sheaf $\Ker A$ that, after possibly going to a subcover, there exists a coboundary which consists of sections $\sigma_i:U_i\to \Ker A$ such that
\[
\g_{ij}(x)=\sigma_i(x)-\sigma_j(x), \ \ x\in U_i\cap U_j.
\]
Define $\bar\alpha_i:U_i\to \FF$,
\[
\bar\alpha_i=\alpha_i-\sigma_i.
\]
Then for $x\in U_i\cap U_j$ we have
\[
\bar\alpha_i-\bar\alpha_j=\alpha_i-\alpha_j-\sigma_i+\sigma_j=\g_{ij}-\g_{ij}=0.
\]
This means that all $\bar\alpha_i$ can be glued together to a global section $\bar\alpha$ of $\FF$. Moreover, since $\sigma_i$ take values in $\Ker A$, we have that $A\bar\alpha_i=A\alpha_i$. Therefore, $A\bar\alpha\vert_{U_i}=A\bar\alpha_i=A\alpha_i=\beta_{U_i}$, thus so constructed global section $\bar\alpha$ has the desired property \eqref{global-ba}.

In order to prove the second statement consider a germ $\beta_x\in \GG_x$ such that $B_x\beta_x=0$. If the sheaf of rings $\RR$ has the property (P) then the sheaf $\Ker B$ is soft and then there exists a global section $\beta$ of $\Ker B$ such that its germ at $x$ coincides with $\beta_x$. Otherwise we can use the property (A) of the subsheaf $\Ker B\subset \GG$ which implies that there are global sections $s_1,\dots,s_q$ of $\Ker B$ such that $\beta_x=\sum a_is_{i,x}$, where $s_{i,x}$ are germs at $x$ of $s_i$ and $a_i$ are elements of $R_x$. Then $\beta=\sum a_is_i$ is a global section of $\Ker B$ and its germ at $x$ coincides with the given germ $\beta_x$. In both cases we have global section $\beta$ of $\Ker B$ which has the given germ $\beta_x$ at $x$. It now follows from exactness of the complex \eqref{c-modules} that there is a global section $\alpha$ of $\FF$ such that $\beta=A_X\alpha$. This implies that $A_x\alpha_x=\beta_x$ and shows exactness of the complex \eqref{c-fibers}.
\end{proof}

\section{Proofs of main results}\label{s-proofs}

Let us fix a category $C^r$, $r\in\{\infty, \omega, hol\}$, and a point $x\in M$.  The operators defined in \eqref{opABXc}, \eqref{opABX} have their germ versions $A_{X,x}$, $B_{X,x}$ and the germ version of the complex has the form
\begin{equation}\label{cABX-germs}
(\La^r_{p-1}(X,x;E))^k\ \mathop{\longrightarrow}\limits^{A_{X,x}}\ \La_p^r(X,x;E)\ \mathop{\longrightarrow}\limits^{B_{X,x}}\  \La_{p+k}^r(X,x;E)
\end{equation}
where $x$ means that we take germs at $x$ of  $C^r$ sections of the corresponding bundles.

The proof of Theorem \ref{thm3} and its special case, Theorem \ref{thm1}, follows from the following lemmata. In all of them we assume that $X\subset M$ is a closed subset which, in the $C^\omega$ and $C^{hol}$ categories, is an analytic subset.

\begin{lemma}\label{lem-germs}
(a) In all three categories for an arbitrary point $x\in X$ we have $\Ker B_{X,x}=\im A_{X,x}$ if $p<\dpt I_{X,x}^r(\Om)$.

\noindent (b) The same holds in the $C^\infty$ category if $M$, $E$, $\o_1,\dots,\o_k$ are of class $C^\omega$, the subset $X\subset M$ is real analytic, and the depth inequality holds for $I_{X,x}^\omega(\Om)$.
\end{lemma}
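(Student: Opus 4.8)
The plan is to reduce part (a) to the purely algebraic Theorem \ref{thmSaito}, and to obtain part (b) from the analytic case of (a) by a flat base change from the analytic to the smooth germ ring. For part (a), fix $x\in X$ and set $R=C^r_x(X)$, a commutative local ring with unity, and $\MM=\La^r_1(X,x;E)$. The decisive structural point is that $\MM$ is a \emph{free} $R$-module of rank $m=\rank E$: choosing a local $C^r$ frame $e_1,\dots,e_m$ of $E$ near $x$, every germ of a section of $E$ over $M$ is uniquely $\sum_i f_i e_i$ with $f_i\in C^r_x(M)$, and, by pointwise linear independence of the $e_i$, such a germ vanishes on $X$ if and only if every $f_i$ vanishes on $X$. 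Passing to the quotient, the classes $\bar e_1,\dots,\bar e_m$ form a free $R$-basis of $\MM$. The same argument applied to $\w^p E$ identifies $\La^r_p(X,x;E)$ with $\w^p\MM$ over $R$, carries $\Om$ to $\o_1\w\cdots\w\o_k\in\w^k\MM$, and identifies the ideal generated by the coefficients of $\Om$ with $I^r_{X,x}(\Om)$.

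Under these identifications the germ complex \eqref{cABX-germs} is exactly the algebraic complex of Theorem \ref{thmSaito}. The inclusion $\im A_{X,x}\subseteq\Ker B_{X,x}$ is immediate because $\Om\w\o_j=0$. For the converse, given $\eta$ with $\Om\w\eta=0$ and $p<\dpt I^r_{X,x}(\Om)$, Theorem \ref{thmSaito} furnishes $\g_1,\dots,\g_k\in\w^{p-1}\MM=\La^r_{p-1}(X,x;E)$ with $\eta=\sum_j\o_j\w\g_j$, that is $\eta\in\im A_{X,x}$. (When $x\notin\Sing(\Om)$ the ideal is all of $R$, the depth is infinite, and the hypothesis is vacuous.) Since Theorem \ref{thmSaito} is valid over an \emph{arbitrary} commutative ring with unity, not merely a Noetherian one, this settles (a) in all three categories simultaneously; the non-Noetherian extension is exactly what is needed for the smooth ring $C^\infty_x$.

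For part (b) the forms $\o_j,\Om$ are analytic, so by (a) in the category $C^\omega$ the analytic germ complex is exact, whereas the germs $\eta,\g_j$ sought are merely smooth. I would transfer exactness through the base change $R^\omega=C^\omega_x(X)\to R^\infty=C^\infty_x(X)$. Because $\La^\omega_p(X,x;E)$ is free over $R^\omega$, one has $\La^\infty_p(X,x;E)=R^\infty\otimes_{R^\omega}\La^\omega_p(X,x;E)$, and the operators $A,B$ are given by the same analytic matrices over both rings; hence the smooth germ complex is obtained from the exact analytic one by applying $R^\infty\otimes_{R^\omega}(-)$. If this functor is exact, i.e. if $R^\infty$ is flat over $R^\omega$, exactness is preserved and (b) follows. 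Equivalently, flatness lets a regular sequence in $I^\omega_{X,x}(\Om)$ remain regular in $R^\infty$, whence $\dpt I^\infty_{X,x}(\Om)\ge\dpt I^\omega_{X,x}(\Om)>p$ and (a) applies directly in $C^\infty$.

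The flat base change is the main obstacle. For $X=M$ it is precisely the classical faithful flatness of $C^\infty_x$ over $C^\omega_x$. For a proper analytic $X$ one must first identify $R^\infty$ with $C^\infty_x\otimes_{C^\omega_x}R^\omega$, which amounts to knowing that the smooth germs vanishing on $X$ are generated by the analytic ones; this is where coherence of $X$ enters, and once it is available, flatness of $R^\infty$ over $R^\omega$ follows by base change from the $X=M$ case. Preservation of regular sequences, and hence of depth, under a flat map is then routine, and properness of the extended ideal, needed so that the depth stays finite and the inequality is meaningful, follows from $\Om(x)=0$ at the singular points.
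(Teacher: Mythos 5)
Your proposal matches the paper's own proof: part (a) is obtained exactly as you do it, by applying the non-Noetherian Saito theorem (Theorem \ref{thmSaito}) to the free module $\MM=\La^r_1(X,x;E)$ of section germs over $R=C^r_x(X)$ with $\La^r_p(X,x;E)\simeq\w^p\MM$, and part (b) by transferring exactness of the analytic germ complex to the smooth one via flatness of $C^\infty_x$ over $C^\omega_x$ (\cite{Ml}). Your treatment of (b) is in fact slightly more careful than the paper's one-line citation, since you flag the identification $C^\infty_x(X)\simeq C^\infty_x(M)\otimes_{C^\omega_x(M)}C^\omega_x(X)$ needed when $X\neq M$, which is where coherence of $X$ enters.
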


\begin{proof}
Statement (a) follows directly from Theorem \ref{thmSaito} applied to the module $\MM=\Gamma^r(X,x;E)$ of germs at $x$ of $C^r$ sections $X\to E$ over the ring $C^r_x(X)$ if we take into account that $\La^r_p(X,x;E)\simeq \w^p\MM$.

Statement (b) follows from statement (a). Namely, the ring of smooth function germs, say at the origin in $\R^n$, is flat over the ring of real analytic function germs at the origin  (\cite{Ml}, Chapter VI, Corollary 1.12). This means that exactness of the complex \eqref{cABX-germs} for germs in the real analytic category, which holds by (a), implies its exactness in the smooth category.
\end{proof}

\begin{lemma}\label{lem-local-global}
In any of the categories $C^r$, $r\in\{\infty, \omega, hol\}$, assuming in the category $C^\omega$ that $X\subset M$ is a coherent analytic subset, the global complex \eqref{opABXc} is exact if and only if its germ version \eqref{cABX-germs} is exact for all $x\in X$.
\end{lemma}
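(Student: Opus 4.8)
The plan is to recognize the two complexes in the statement as, respectively, the complex of global sections \eqref{c-modules} and the stalkwise complex \eqref{c-fibers} of a single complex of sheaves \eqref{c-sheaves} over $X$, and then to apply Proposition \ref{prop1}. I would let $\FF$, $\GG$, $\HH$ be the sheaves on $X$ of germs of $C^r$ sections of $(\w^{p-1}E)^k$, $\w^pE$, $\w^{p+k}E$, respectively, with $A$ and $B$ the sheaf morphisms induced by \eqref{opABX}. By construction the stalk at $x\in X$ of $\GG$ is $\La_p^r(X,x;E)$, and similarly for $\FF$ and $\HH$, so the fiber complex \eqref{c-fibers} is exactly the germ complex \eqref{cABX-germs}, while the module of global sections reproduces \eqref{opABXc}. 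Since $X$ is closed in a second countable $M$, it is metrizable and Lindel\"of, so the hypotheses on the base space in Proposition \ref{prop1} are met, and the two implications of the lemma become parts (a) and (b) of that proposition once the remaining hypotheses are verified in each category.

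In the smooth category both verifications are immediate. The sheaf of rings $\RR$ of germs of $C^\infty$ functions on $X$ admits partitions of unity, since subordinate partitions of unity on $M$ restrict to $X$; hence part (a) of Proposition \ref{prop1} yields that stalkwise exactness implies exactness of \eqref{opABXc}. The same sheaf has property (P), as noted after that property is introduced, so part (b) gives the reverse implication. No coherence hypothesis is needed here.

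The substance of the proof lies in the analytic categories, where partitions of unity and property (P) are unavailable and must be replaced by properties (B) and (A) of the kernel subsheaves $\Ker A$ and $\Ker B$. This is where coherence enters. In the holomorphic case $X$ is a closed complex analytic subset of the Stein manifold $M$ and is therefore a Stein space; in the real analytic case the assumed coherence of $X$ makes the ideal sheaf of $X$ coherent. In either case $\FF$, $\GG$, $\HH$ are coherent $\OO_X$-modules, and since a kernel of a morphism of coherent sheaves is coherent, both $\Ker A$ and $\Ker B$ are coherent. I would then invoke the Cartan theorems for coherent sheaves: Theorem B (vanishing of $H^1$) furnishes property (B) for $\Ker A$ and hence, by part (a), the implication ``stalkwise exact $\Rightarrow$ globally exact''; Theorem A (generation of each stalk by global sections) furnishes property (A) for $\Ker B$ and hence, by part (b), the converse. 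In the holomorphic setting these are the usual Cartan Theorems A and B on the Stein space $X$; in the real analytic setting they are their analogues on a coherent real analytic set, obtained by complexifying $X$ to a Stein neighbourhood, extending the coherent sheaf, applying the complex theorems, and restricting.

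I expect the main obstacle to be exactly this analytic input: establishing that $\Ker A$ and $\Ker B$ are coherent and that the Cartan vanishing and generation theorems apply on the possibly singular set $X$ itself, rather than on the ambient $M$. This forces one to work with the structure sheaf $\OO_X$ and, in the real analytic case, to run the complexification argument carefully, which is where coherence of $X$ is genuinely used. A secondary, routine point is the identification of the global quotient module $\La_p^r(X;E)$ with the global sections of $\GG$; this is immediate in the smooth case by softness and follows in the analytic cases from coherence together with Theorem B.
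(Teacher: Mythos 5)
Your proposal is correct and follows essentially the same route as the paper's proof: reduce both implications to Proposition \ref{prop1}, settle the smooth case by partitions of unity and property (P), and in the analytic cases establish coherence of the section sheaves and of the kernel subsheaves $\Ker A$ and $\Ker B$, then invoke Cartan's Theorems A and B (in the real-analytic case via complexification, which the paper handles by citing Cartan's real versions together with the Grauert embedding theorem) to obtain properties (A) and (B). The only notable difference is presentational --- the paper derives coherence of the sheaves on $X$ as quotients of coherent sheaves on $M$ rather than working with $\OO_X$-modules directly --- so no further comparison is needed.
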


\begin{proof}
Let us first note that the modules of sections appearing in the complex \eqref{opABXc} are well defined over open subsets of $X$. Thus sheaves of modules
\begin{equation}\label{3sheaves}
\SS^r(X,(\w^{p-1}E)^k),\ \ \SS^r(X,\w^p E),\ \  \SS^r(X,\w^{p+k}E)
\end{equation}
of $C^r$ section-germs of the corresponding bundles over $X$ are well defined (over the sheaves of rings of $C^r$ function-germs on $X$).

The lemma can be reduced to Proposition \ref{prop1}. We first show this in the case of smooth category. As mentioned in Section \ref{s-sheaves}, the sheaf of rings $\RR$ of smooth function germs on a second countable manifold $M$ has the partition of unity property and the property (P). Similarly, the sheaf of rings $\RR$ of function germs on $X$ defined by the sections in $C^\infty(X\cap U)=C^\infty(U)/C^\infty(U,X\cap U)$, for any open $U\subset M$, has analogous properties. This means that Proposition \ref{prop1} is applicable to the complexes \eqref{opABXc} and \eqref{cABX-germs} and it implies Lemma \ref{lem-local-global} in the case of smooth category.

In order to reduce the lemma to Proposition \ref{prop1} in the categories $C^\omega$ and $C^{hol}$ we will first prove that in these cases the above sheaves are coherent.  As in both cases the proof is analogous we fix our attention on one of them, the category $C^{hol}$. Denote by $\OO(M)$ and $\OO(X)$ the sheaves of germs of complex analytic functions on $M$ and $X$, respectively. Then $\OO(X)$ is the quotient sheaf $\OO(M)/\OO(M,X)$, with $\OO(M,X)$ the subsheaf of analytic functions vanishing on $X$. Since $X$ was assumed to be a closed analytic subset of the Stein manifold $M$, both sheaves $\OO(M)$ and $\OO(M,X)$ are coherent, by a theorem of Oka. Therefore, the sheaf $\RR(X)$ is coherent as a quotient of two coherent sheaves. Furthermore, the sheaf $\OO(M,\w^p E)$ of germs of holomorphic sections of the bundle $\w^pE$ is coherent since coherence is a local property and, for any contractible pseudoconvex open subset $U\subset M$,  $\OO(U,\w^p E)$ is isomorphic to a Cartesian product $(\OO(U))^k$ with $k$ a positive integer. The subsheaf $\OO(M,X;\w^p E)$ of sections vanishing on $X$ is also coherent and again $\OO(X,\w^p E)$ is coherent as the quotient of these two sheaves. In an analogous way one can show that the sheaves $\OO(X,(\w^{p-1}E)^k)$ and $\OO(X,\w^{p+k}E)$ are coherent.

Since $A_X$ and $B_X$ define morphisms of the sheaves $\OO(X,(\w^{p-1}E)^k)\to \OO(X,\w^p E)$ and $\OO(X,\w^p E)\to \OO(X,\w^{p+k}E)$, we additionally have that the kernel subsheaves $\Ker A_X$ and $\Ker B_X$ are coherent by Theorem 2, Section 13 in \cite{Se}.

The proof of coherence of the above sheaves in the real analytic category is analogous, where we additionally use the assumption that the closed subset $X\subset M$ is coherent, i.e. the sheaf of rings of $C^\omega$ functions vanishing on $X$ is coherent. (In the complex category it is always coherent.) Moreover, by the same theorem in \cite{Se} the kernel subsheaves $\Ker A_X$ and $\Ker B_X$ are also coherent.

From the fact that the sheaves in \eqref{3sheaves} are coherent it follows that they satisfy properties (A) and (B) in Section \ref{s-sheaves}.
Namely, in the holomorphic category this follows from Theorems A and B of H. Cartan which hold for Stein manifolds. In the real analytic category this can be deduced from real versions of Cartan's Theorems A and B proved in the case of $M=\R^n$ in \cite{C}, Theorem 3, and from the Grauert embedding theorem \cite{G} saying that every $C^\omega$ manifold, countable at infinity, can be embedded as a closed $C^\omega$ submanifold in $\R^n$, for suitable $n$ (for this version of Cartan's theorem see also  \cite{GMT}, III. 3.7).

Since in both real analytic and holomorphic categories the involved sheaves have the properties (A) and (B) required in Proposition \ref{prop1}, the exactness of the global complex \eqref{opABXc} is equivalent to the exactness of its germ version \eqref{cABX-germs} for any $x\in X$.
\end{proof}

\it Proof of Theorems \ref{thm1} and \ref{thm3}. \rm
It is enough to prove Theorem \ref{thm3} as Theorem \ref{thm1} is its special case. Consider the global complex \eqref{opABXc}. The assumptions of the theorem imply that statement (a) in Lemma \ref{lem-germs} can be applied for any positive $p$ such that $p<\dpt I^r_x(\Om)$ for all $x\in \Sing(\Om)$. It can also be applied for $x\notin \Sing(\Om)$ since in this case the ideal $I^r_x(\Om)$ coincides with the ring of all germs at $x$ and, consequently, $\dpt I^r_x(\Om)=\infty$. It follows from the lemma that the local (germ) version of this complex is exact for germs at any $x\in M$. Using Lemma \ref{lem-local-global} we deduce then that the global complex \eqref{opABXc} is exact in the considered categories.
\hfill$\Box$
\bigskip

\it Proof of Corollaries \ref{cor1} and \ref{cor2}. \rm
The proof is analogous to the above proof where instead of using statement (a) of Lemma \ref{lem-germs} one should use statement (b) of that lemma.
\hfill$\Box$
\bigskip

In order to prove Theorems \ref{thm2} and \ref{thm4} we will use the following lemmata. The first one, needed for proving the next lemma, says that the extension property for functions implies the extension property for sections of a vector bundle.

\begin{lemma}\label{lem-split0}
If $E$ is a smooth vector bundle over a smooth manifold $M$ and a closed subset $X\subset M$ has the extension property then there exists a linear continuous operator $L:C^\infty(X;E)\to C^\infty(M;E)$ such that $L(e)\vert_X=e$, i.e., the natural quotient map $Q:C^\infty(M;E)\to C^\infty(X;E)$ has a continuous linear right inverse.
\end{lemma}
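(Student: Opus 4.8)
The plan is to reduce the bundle-valued extension to the scalar extension operator $S\colon C^\infty(X)\to C^\infty(M)$ supplied by the extension property (so that $S(f)\vert_X=f$, i.e. $Q\circ S=\Id$), by realizing $E$ as a direct summand of a trivial bundle and then applying $S$ coordinatewise. First I would invoke the standard fact that, since $M$ is second countable (hence paracompact and finite-dimensional), the smooth bundle $E$ is a direct summand of a finite-rank trivial bundle: there are an integer $N$ and smooth bundle morphisms $\iota\colon E\to M\times\R^N$ and $\pi\colon M\times\R^N\to E$ over $\Id_M$ with $\pi\circ\iota=\Id_E$. Concretely, one embeds $E$ fibrewise-injectively into $M\times\R^N$ using finitely many trivializations glued by a partition of unity, and takes $\pi$ to be the fibrewise orthogonal projection for a bundle metric.

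Next I would pass to section spaces. Composition with $\iota$ gives a continuous linear map $\iota_*\colon C^\infty(X;E)\to C^\infty(X;M\times\R^N)=C^\infty(X)^N$, and composition with $\pi$ gives a continuous linear map $\pi_*\colon C^\infty(M;M\times\R^N)=C^\infty(M)^N\to C^\infty(M;E)$; both are continuous for the $C^\infty$ topologies and commute with restriction to $X$. Applying $S$ in each of the $N$ coordinates defines a continuous linear operator $S^N\colon C^\infty(X)^N\to C^\infty(M)^N$ with $S^N(g)\vert_X=g$. I would then set
\[
L=\pi_*\circ S^N\circ\iota_*\colon C^\infty(X;E)\to C^\infty(M;E),
\]
which is linear and continuous as a composition of continuous linear maps.

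Finally I would verify the right-inverse property. For $e\in C^\infty(X;E)$ we have $(S^N\iota_* e)\vert_X=\iota_* e$ since $S^N$ restricts to the identity on $X$, whence
\[
L(e)\vert_X=\pi_*\bigl(S^N\iota_* e\bigr)\vert_X=\pi\circ\bigl(\iota\circ e\bigr)=(\pi\circ\iota)\circ e=e,
\]
using $\pi\circ\iota=\Id_E$. Thus $Q\circ L=\Id$, as required. The main obstacle is the first step: producing the trivializing pair $(\iota,\pi)$ with $\pi\circ\iota=\Id_E$ and checking that the induced maps $\iota_*,\pi_*$ are continuous for the Fr\'echet topologies; this is exactly where the bundle structure (as opposed to a product) enters, and it rests on second countability of $M$.

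An alternative, entirely elementary route avoids the embedding. Take a locally finite trivializing cover $\{U_\alpha\}$ with frames $\{e_i^\alpha\}$, a subordinate partition of unity $\{\rho_\alpha\}$, and cutoffs $\chi_\alpha$ with $\chi_\alpha\equiv 1$ on $\supp\rho_\alpha$ and $\supp\chi_\alpha\subset U_\alpha$. Writing $e=\sum_i f_i^\alpha e_i^\alpha$ on $U_\alpha\cap X$, extending each $\rho_\alpha f_i^\alpha$ by zero to an element of $C^\infty(X)$, applying $S$, and setting
\[
L(e)=\sum_{\alpha,i}\chi_\alpha\, S(\rho_\alpha f_i^\alpha)\, e_i^\alpha
\]
gives the same conclusion: on $X$ one has $S(\rho_\alpha f_i^\alpha)\vert_X=\rho_\alpha f_i^\alpha$ and $\chi_\alpha\rho_\alpha=\rho_\alpha$, so $L(e)\vert_X=\sum_\alpha\rho_\alpha e=e$. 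Here the only care needed is local finiteness of the sum and continuity of each constituent operation (coefficient extraction, multiplication by $\rho_\alpha$ and $\chi_\alpha$, extension by zero, and $S$), all of which are continuous for the $C^\infty$ topology.
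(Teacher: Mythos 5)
Your proof is correct, but your primary route is genuinely different from the paper's. You realize $E$ as a direct summand of a trivial bundle $M\times\R^N$ via fibrewise injective $\iota$ and a projection $\pi$ with $\pi\circ\iota=\Id_E$ (standard for a second countable, hence finite-dimensional paracompact, base), and then conjugate the coordinatewise scalar extension $S^N$ by $\iota_*$ and $\pi_*$; the right-inverse property becomes the one-line computation $\pi_*\iota_*=\Id$. The paper instead produces $m+n$ \emph{global} sections $e_1,\dots,e_{m+n}$ of $E$ spanning every fiber by the transversality theorem, expands $e$ locally in suitable subfamilies of these, and glues with a partition of unity $\varphi_i$ on $X$ and cutoffs $\psi_i$ (the cutoffs being necessary because $S$ does not preserve supports). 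Your approach buys a cleaner verification and hides all the gluing inside the single standard fact that $E$ is a summand of a trivial bundle, at the cost of introducing an auxiliary bundle, a metric and an orthogonal projection; the paper's approach stays inside $E$ itself but pays for it with the same partition-of-unity and support bookkeeping that appears in your ``alternative elementary route,'' which is essentially the paper's argument with local frames $e_i^\alpha$ and cutoffs $\chi_\alpha$ in place of the globally defined spanning sections. Both of your variants handle the two delicate points correctly: that $S(\rho_\alpha f_i^\alpha)$ need not be supported in $U_\alpha$ (hence the cutoff $\chi_\alpha$ with $\chi_\alpha\equiv 1$ on $\supp\rho_\alpha$), and that local finiteness of the cover guarantees the sum defines a continuous operator.
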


\begin{proof}
Let $S:C^\infty(X)\to C^\infty(M)$ be an extension operator. For a trivial bundle $E$ we can choose its global basis and use the extension operator component-wise. In general, if $\dim M=n$ and $E$ has rank $m$ then there exist generic smooth global sections $e_1,\dots,e_{m+n}:M\to E$ such that at each point of $M$ they span the whole fiber of $E$. This follows from the transversality theorem (see e.g. \cite{H}, Ch. 3, Theorem 2.1). Namely, the set of $m\times (m+n)$ matrices having rank smaller then $m$ is a stratified set of codimension $n+1$ (a finite set of submanifolds of codimension $n+1$ and larger) in the space of all real matrices of this size. The same property can be associated to collections of linearly dependent $m+n$ vectors in the fibers of $E$. By the transversality theorem there is a generic sequence of smooth sections $(e_1,\dots,e_{m+n}):M\to E\oplus\cdots\oplus E$ ($(n+m)$-times) transversal at each point $x$ to these submanifolds. Since the codimensions of them are larger then the dimension of $M$, transversality means no intersection, i.e. at no point the sections $(e_1,\dots,e_{m+n})$ span a subspace smaller then the fiber $E_x$.

Having such sections we construct the operator $L$. Let $\{U_i\}_{i\ge 1}$ and $\{V_i\}_{i\ge 1}$ be locally finite open covers of $M$ such that $U_i\subset\!\subset V_i\subset\!\subset M$ and for each $i$ there is a fixed subset of $n$ sections, among sections $e_1,\dots,e_{m+n}$, which are linearly independent on $V_i$. Then, given a section $e:X\to E$, we can find smooth functions $f^i_1,\dots,f^i_{m+n}$ on $V_i\cap X$ such that $e\vert_{V_i\cap X}=\sum_j f^i_je_j$ on $V_i\cap X$. Let $(\varphi_i)$ be a partition of unity on $X$ subordinate to the cover $\{X\cap U_i\}$ and let $\psi_i:M\to [0,1]$ be smooth functions such that $\supp \psi_i\subset V_i$ and $\psi_i\vert_{U_i}=1$. We define an extension operator
\[
Le=\sum_{1\le j\le m+n}\sum_{i\ge 1}\psi_iS(\varphi_i f^i_j)e_j,
\]
where $\varphi_if^i_j$ are extended by zero from $X\cap V_i$ to $X$ and $S(\varphi_if^i_j)=0$ if $U_i\cap X=\emptyset$.
The sum is locally finite and on any $U_i\cap X$ it is equal $\sum_i\sum_j\varphi_if^i_je_j=\sum_i\varphi_ie$=e. Thus,
$L:C^\infty(X;E)\to C^\infty(M;E)$ is well defined, continuous in the $C^\infty$ topologies, and $(Le)\vert_X=e$, i.e., $L$ is a continuous right inverse to the natural quotient operator $Q_E:C^\infty(M;E)\to C^\infty(X;E)$.
\end{proof}

The following results are simplified versions of Lemmata 3.4 and 3.6 from \cite{DJ}. Recall that a subspace $Y_1$ of a Fr\'echet space $Y$ is called \emph{complemented} if it is closed and there exists another closed subspace $Y_2$ such that $Y=Y_1\oplus Y_2$.

Let $\ss$ denote the space of rapidly decreasing real or complex sequences
\[
\ss=\{\,a=(a_n)_{n\in\N}\,:\ \forall\, k\ \|a\|_k:=\sup_{n\in \N}\{n^k|a_n|<\infty \}\,\}.
\]
With the topology defined by the sequence of seminorms $\|\cdot\|_k$ this is a nuclear Fr\'echet space isomorphic, via the Fourier transform, to the space $C^\infty(S^1)$. Let $\ss^\infty$ denote the countable product of copies of the Fr\'echet space $\ss$, where the topology in $\ss^\infty$ is given by the sequence of topologies induced by projections onto finite subproducts. Note that $\ss$ is a complemented subspace of $\ss^\infty$.

\begin{lemma}\label{lem-split1}
If $M$ is a non-compact $C^\infty$ manifold then the spaces $C^\infty(M)$ and $\ss^\infty$ are isomorphic as Fr\'echet spaces.
If $X\subset M$ has the extension property then $C^\infty(M,X)$ and $C^\infty(X)$ are isomorphic to complemented subspaces of $C^\infty(M)$ and, thus, to complemented subspaces of $\ss^\infty$. Under the same assumptions the space of smooth sections $C^\infty(M;E)$ of a vector bundle $E$ is isomorphic to $\ss^\infty$ and the spaces $C^\infty(M,X;E)$, $C^\infty(X;E)$ are isomorphic to complemented subspaces of $\ss^\infty$. The same holds in the case of $M$ compact if the space $\ss^\infty$ is replaced with $\ss$.
\end{lemma}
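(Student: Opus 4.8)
The plan is to isolate one genuinely structural isomorphism, namely $C^\infty(M)\cong\ss^\infty$ for $M$ non-compact and $C^\infty(M)\cong\ss$ for $M$ compact, and then to deduce every remaining assertion by soft functional-analytic reductions. For $M$ compact I would fix a Riemannian metric, pass to the associated Laplace operator, and use its eigenfunction expansion: the eigenfunctions form a Hilbert basis of $L^2(M)$, the eigenvalues grow polynomially by Weyl's law, and by elliptic regularity a distribution is smooth exactly when its Fourier coefficients decay faster than any power of the eigenvalue. The coefficient map is then a topological isomorphism onto the space of rapidly decreasing sequences, giving $C^\infty(M)\cong\ss$, consistently with $\ss\cong C^\infty(S^1)$. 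For $M$ non-compact I would exhaust $M$ by compact sets and choose a locally finite countable cover by relatively compact charts; restricting to the pieces and recording the (closed, complemented) matching conditions along overlaps exhibits $C^\infty(M)$ as a countable product of copies of $\ss$, that is $C^\infty(M)\cong\ss^\infty$. This is precisely the content quoted from \cite{DJ}.

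Granting the base isomorphism, the statements about $X$ reduce to the general fact that a continuous linear surjection of Fr\'echet spaces possessing a continuous linear right inverse splits the associated short exact sequence. If $X$ has the extension property then the quotient map $Q\colon C^\infty(M)\to C^\infty(X)$ admits a continuous right inverse $S$, so $C^\infty(M)=\Ker Q\oplus\im S$ with $\Ker Q=C^\infty(M,X)$ and $\im S\cong C^\infty(X)$. Hence both $C^\infty(M,X)$ and $C^\infty(X)$ are complemented in $C^\infty(M)$, and therefore complemented in $\ss^\infty$ through the base isomorphism.

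For the bundle $E$ I would rerun the two constructions above with $\R^m$-valued local data: over a relatively compact chart trivializing $E$ a section is an $m$-tuple of functions, so the compact model contributes a factor $\ss^m$ and the non-compact product assembles to $(\ss^m)^\infty\cong\ss^\infty$; this yields $C^\infty(M;E)\cong\ss^\infty$ (respectively $\cong\ss$ for $M$ compact). Alternatively one embeds $E$ as a subbundle of a trivial bundle $M\times\R^N$, realizes $C^\infty(M;E)$ as a complemented subspace of $C^\infty(M)^N\cong\ss^\infty$, and upgrades \emph{complemented} to \emph{isomorphic} by the Pe\l czy\'nski decomposition method, using $(\ss^\infty)^\infty\cong\ss^\infty$. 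Finally, by Lemma \ref{lem-split0} the extension operator $L$ is a continuous right inverse to the quotient map $Q_E\colon C^\infty(M;E)\to C^\infty(X;E)$, so exactly as before $C^\infty(M;E)=C^\infty(M,X;E)\oplus\im L$, and both $C^\infty(M,X;E)$ and $C^\infty(X;E)\cong\im L$ are complemented in $C^\infty(M;E)\cong\ss^\infty$.

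The one genuinely hard ingredient is the base isomorphism $C^\infty(M)\cong\ss^\infty$ itself: identifying these nuclear Fr\'echet spaces requires either the explicit eigenfunction and exhaustion analysis sketched above or the linear-topological structure theory (the Vogt invariants) underlying \cite{DJ}, and it is where all the analysis is concentrated. Everything afterward, namely the splitting of short exact sequences, the product and decomposition identities $(\ss^m)^\infty\cong(\ss^\infty)^\infty\cong\ss^\infty$, and the passage to vector bundles via Lemma \ref{lem-split0}, is routine.
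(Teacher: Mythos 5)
Your proposal is correct and follows essentially the same route as the paper: the base isomorphisms $C^\infty(M)\cong\ss^\infty$ (resp.\ $\cong\ss$ for $M$ compact) are outsourced to the Vogt/Doma\'nski--Vogt structure theory exactly as the paper outsources them to Theorem 2.4 of \cite{DV} and Theorem 2.3 of \cite{V}, and every complementation claim is then obtained from the continuous projection $P=SQ$ (resp.\ $LQ_E$, via Lemma \ref{lem-split0}) splitting $C^\infty(M)=C^\infty(M,X)\oplus\im S$ and its bundle analogue. The only caveat is that your exhaustion sketch for non-compact $M$ understates the difficulty --- the overlap conditions cut out a \emph{closed subspace} of a countable product of copies of $\ss$, and identifying that subspace with $\ss^\infty$ is exactly the content of the cited splitting theorem --- but since you ultimately defer to the same references for this base isomorphism, your argument matches the paper's at its one nontrivial point.
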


\begin{proof}
For $M$ non-compact manifold the statements on $C^\infty(M)$ and $C^\infty(M;E)$ are consequences of Theorem 2.4 in \cite{DV}.
If $M$ is compact then, using the same proof as of Theorem 2.3 in \cite{V}, one can show that the Fr\'echet spaces $C^\infty(M)$ and $C^\infty(M;E)$ are isomorphic to the Fr\'echet space $\ss$.

If $X$ has the extension property then denoting the extension operator by $S:C^\infty(X)\to C^\infty(M)$ and the restriction (quotient) map by $Q:C^\infty(M)\to C^\infty(X)$ we see that the operator $P=SQ:C^\infty(M)\to C^\infty(M)$ is a continuous projection and the space $C^\infty(M)$ is the direct sum
\[
C^\infty(M)=\Ker(P)\oplus \Ker(I-P)=C^\infty(M,X)\oplus \im S,
\]
thus $C^\infty(M,X)$ and $C^\infty(X)\simeq\im S$ are complemented subspaces of the space $C^\infty(M)$ isomorphic to $\ss$, if $M$ is compact, and to $\ss^\infty$ if $M$ is not compact. The analogous property of the spaces of sections of $E$ follows by using Lemma \ref{lem-split0} and replacing the operator $S$ above by the operator $L$. One then obtains that $C^\infty(M;E)=C^\infty(M,X;E)\oplus \im L$ and thus $C^\infty(M,X;E)$ and $C^\infty(X;E)\simeq\im L$ are complemented subspaces of the space $C^\infty(M;E)$ isomorphic to $\ss$ or $\ss^\infty$.
\end{proof}

\begin{lemma}[\cite{DJ}, Lem. 3.6]\label{lem-split2}
Every exact complex of Fr\'echet spaces
\[
0\longrightarrow X_0\mathop{\longrightarrow}\limits^{T_0} X_1 \mathop{\longrightarrow}\limits^{T_1} X_2\mathop{\longrightarrow}\limits^{T_2} \ \cdots\ \mathop{\longrightarrow}\limits^{T_n} X_{n+1}
\]
splits (i.e. for $i=0,\dots,n-1$ the operators $T_i:X_i\to \im T_i$ have continuous linear right inverses) if $X_0,\dots,X_n$ are complemented subspaces of $\ss^\infty$.
\end{lemma}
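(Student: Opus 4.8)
The plan is to reduce the long exact complex to a family of short exact sequences and to split each of them separately. First I would introduce the cycle spaces $Z_i=\im T_i$, with the convention $Z_{-1}=\{0\}$. Since each $T_i$ is continuous and the complex is exact, $Z_i=\Ker T_{i+1}$ is closed for $i\le n-1$, hence a Fr\'echet space in the induced topology, and the open mapping theorem identifies $Z_i$ topologically with the quotient $X_i/\Ker T_i$. Exactness gives $\Ker T_i=\im T_{i-1}=Z_{i-1}$, so for each $i=0,\dots,n-1$ one obtains a topologically exact short sequence
\[
0\longrightarrow Z_{i-1}\longrightarrow X_i\mathop{\longrightarrow}\limits^{T_i}Z_i\longrightarrow 0 .
\]
A continuous linear right inverse of the surjection $T_i\colon X_i\to Z_i=\im T_i$ is precisely the operator required in the definition of splitting, so the whole complex splits as soon as each of these short sequences splits; the case $i=0$ is automatic, $T_0\colon X_0\to Z_0$ being a continuous bijection of Fr\'echet spaces.

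The conceptual core is the reduction of \emph{splitting} to \emph{complementation}: the $i$-th short sequence splits if and only if the closed subspace $\Ker T_i=Z_{i-1}$ is complemented in $X_i$. Here I would use that $X_i$ is, by hypothesis, identified with a complemented (hence closed) subspace of $\ss^\infty$, so that $Z_{i-1}\subset X_i\subset\ss^\infty$ as closed subspaces. If one knows that $Z_{i-1}$ is already complemented in the ambient space $\ss^\infty$ by a continuous projection $P\colon\ss^\infty\to Z_{i-1}$, then the restriction $P|_{X_i}\colon X_i\to Z_{i-1}$ is again a projection — it is the identity on $Z_{i-1}$ and takes values in $Z_{i-1}$ — so $Z_{i-1}$ is complemented in $X_i$ and $T_i$ splits. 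Everything is thereby reduced to showing that $Z_{i-1}=\im T_{i-1}$ is a complemented subspace of $\ss^\infty$.

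To obtain this I would record the hereditary structure of the spaces. Being complemented in $\ss^\infty$, each $X_j$ with $j\le n$ is a nuclear Fr\'echet space with property $(\Omega)$, the latter being inherited from $\ss^\infty$ by complemented subspaces. The space $Z_{i-1}=\im T_{i-1}$ is a topological quotient of $X_{i-1}$, and both nuclearity and $(\Omega)$ pass to quotients; hence $Z_{i-1}$ is a nuclear Fr\'echet space with $(\Omega)$ sitting as a closed subspace of $\ss^\infty$. At this point I would invoke the splitting theory of Vogt that underlies the representation results of \cite{DV}: a closed subspace of $\ss^\infty$ carrying property $(\Omega)$ is complemented in $\ss^\infty$ (equivalently, the short exact sequence with $\ss^\infty$ in the middle term splits). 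Feeding this back through the projection argument of the previous step produces the continuous right inverses and finishes the proof.

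The main obstacle is this last functional-analytic step — identifying and verifying the correct splitting criterion in the non-normable setting of $\ss^\infty$. In the compact case, where $\ss^\infty$ is replaced by the power series space $\ss$, the step is transparent: a closed subspace of $\ss$ automatically has property $(DN)$, so a nuclear subspace of $\ss$ with $(\Omega)$ satisfies both Vogt--Wagner conditions $(DN)$ and $(\Omega)$ and is therefore complemented in $\ss$, after which the restriction-of-projection argument applies verbatim. For $\ss^\infty$ there is no continuous norm, hence no $(DN)$ available from the ambient space, so the naive Vogt--Wagner criterion does not apply and one must appeal to the refined splitting theorem for complemented subspaces of $\ss^\infty$ in the form used in \cite{DJ}; checking that the $(\Omega)$-type hypotheses alone suffice here is where the genuine work lies.
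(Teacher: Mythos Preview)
The paper does not give a proof of this lemma at all: it is stated with the attribution ``[\cite{DJ}, Lem.~3.6]'' and then immediately used in the proofs of Theorems~\ref{thm2} and~\ref{thm4}. So there is no in-paper argument to compare your proposal against.

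That said, your outline is the standard shape such a proof takes and matches what one finds in the cited source. The reduction to short exact sequences $0\to Z_{i-1}\to X_i\to Z_i\to 0$, the observation that splitting amounts to $Z_{i-1}$ being complemented in $X_i$, and the trick of restricting an ambient projection $\ss^\infty\to Z_{i-1}$ to $X_i$ are all correct and are exactly how the argument is organised in \cite{DJ}. Your identification of the genuine content --- showing that a closed subspace of $\ss^\infty$ which is a quotient of a complemented subspace of $\ss^\infty$ is itself complemented in $\ss^\infty$ --- is also accurate, and you are right that this is where one must invoke the Doma\'nski--Vogt splitting theorem from \cite{DV} (their Theorem~2.1), since the classical $(DN)$/$(\Omega)$ criterion of Vogt--Wagner is tailored to $\ss$ rather than $\ss^\infty$. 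Your proposal does not actually carry out that step, but neither does the present paper; it simply imports the finished lemma.
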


\begin{remark}\rm
The Fr\'echet spaces $C^\infty(M)$, $C^\infty(M,E)$, $s$ and $\ss^\infty$ can be endowed with natural gradings and then all morphisms (isomorphisms) appearing in the above lemmata are graded morphisms (isomorphisms). See \cite{DV} and \cite{DJ} for details.
\end{remark}

\it Proof of Theorem \ref{thm2}. \rm
The complex \eqref{cABMs} can be completed to the complex
\begin{equation}\label{cABMc}
0\longrightarrow \Ker A \longrightarrow (\La^\infty_{p-1}(M;E))^k \mathop{\longrightarrow}\limits^{A} \La_p^\infty(M;E) \mathop{\longrightarrow}\limits^{B} \La_{p+k}^\infty(M;E),
\end{equation}
where the second arrow denotes the operator of inclusion. In order to prove the first statement note that the complex is exact at the first two spaces (by definitions of the arrows) and it is also exact at the third space by the assumption of the theorem. The space $(\La^\infty_{p-1}(M;E))^k$ of smooth sections of the vector bundle $(\w^{p-1}E)^k$ over $M$ is isomorphic to the Fr\'echet space $\ss^\infty$, or to $\ss$, by Lemma \ref{lem-split1}. By the same lemma the spaces $\La_p^\infty(M;E)$ and $\La_{p+k}^\infty(M;E)$ of smooth sections of the vector bundles $\w^pE$ and $\w^{p+k}E$ over $M$ are isomorphic to $\ss$ or $\ss^\infty$. We can thus use Lemma \ref{lem-split2} to deduce that the exact sequence \eqref{cABMc} splits, in particular \eqref{cABMs} splits. The second statement follows from the first and from Theorem \ref{thm1}.
\hfill $\Box$
\medskip

\it Proof of Theorem \ref{thm4}. \rm
The proof is analogous to the above proof with the spaces $(\La^\infty_{p-1}(M;E))^k$, $\La_p^\infty(M;E)$, $\La_{p+k}^\infty(M;E)$ replaced with $(\La^\infty_{p-1}(X;E))^k$, $\La_p^\infty(X;E)$, and $\La_{p+k}^\infty(X;E)$. In this case, however, in order to use Lemma \ref{lem-split1} we have to check that the subset $X\subset M$ has the extension property. This is assumed in the first statement of the theorem and the conclusion follows from Lemma \ref{lem-split2}. In the second statement the extension property follows from our assumptions that $M$ is a real analytic manifold and $X$ is a closed real analytic subset of $M$ and from Theorem 0.2.1 in \cite{BS}. Then the second statement follows from the first and Theorem \ref{thm3}.
\hfill $\Box$
\medskip

\end{document}